\date{}
\newtheorem{theorem}{Theorem}
\newtheorem{lemma}[theorem]{Lemma}
\theoremstyle{definition}
\newcommand{\dual}[2]{\langle#1\hspace*{.5mm},#2\rangle}
\newcommand{\vdual}[2]{(#1\hspace*{.5mm},#2)}
\newcommand{\abs}[1]{\vert #1 \vert}
\newcommand{\norm}[3][]{#1\|#2#1\|_{#3}}
\newcommand{\snorm}[2]{|#1|_{#2}}
\newcommand{\est}{\mathrm{est}}
\newcommand{\err}{\mathrm{err}}
\newcommand{\diam}{\mathrm{diam}}
\newcommand{\wilde}{\widetilde}
\newcommand{\wat}{\widehat}
\newcommand{\jump}[1]{[#1]}
\newcommand{\avg}[1]{\{#1\}}
\newcommand{\hp}{\mathrm{hp}}
\def\eps{\varepsilon}
\newcommand{\TtoT}{\ensuremath{{\Theta}}}
\newcommand{\TtoTmat}{\ensuremath{\boldsymbol{\Theta}}}
\newcommand{\BLFmat}{\ensuremath{\mathbf{B}}}
\newcommand{\R}{\ensuremath{\mathbb{R}}}
\newcommand{\N}{\ensuremath{\mathbb{N}}}
\newcommand{\Dd}{\ensuremath{\mathcal{D}}}
\newcommand{\Ff}{\ensuremath{\mathcal{F}}}
\newcommand{\vv}{\ensuremath{\boldsymbol{v}}}
\newcommand{\ww}{\ensuremath{\boldsymbol{w}}}
\newcommand{\TT}{\ensuremath{\mathcal{T}}}
\newcommand{\Ss}{\ensuremath{\mathcal{S}}}
\newcommand{\el}{\ensuremath{T}}
\newcommand{\OO}{\ensuremath{\mathcal{O}}}
\newcommand{\VV}{\ensuremath{\mathbf{V}}}
\newcommand{\ff}{\ensuremath{\mathbf{f}}}
\newcommand{\xx}{\ensuremath{\mathbf{x}}}
\newcommand{\uu}{\boldsymbol{u}}
\newcounter{constantsnumber}
\def\setc#1{
  \ifthenelse{\equal{#1}{poinc}}{C_{\rm edge}}{ 
   \refstepcounter{constantsnumber}
   \label{const#1}C_{\theconstantsnumber}}}
\def\definec#1{\refstepcounter{constantsnumber}\label{const#1}}%
\def\c#1{
  \ifthenelse{\equal{#1}{infsup}}{C_{\rm infsup}}{ 
  \ifthenelse{\equal{#1}{stab}}{C_{\rm b}}{ 
    C_{\ref{const#1}}}}}
\title{DPG method with optimal test functions\\for a fractional advection diffusion equation
\thanks{Supported by CONICYT through FONDECYT projects 1150056, 3140614,
        3150012, and Anillo ACT1118 (ANANUM).}}
\author{Vincent~J.~Ervin\thanks{Department of Mathematical Sciences,
	  Clemson University, Clemson, South Carolina 29634-0975.
	  email: {\tt vjervin@clemson.edu}}
	\and Thomas~F\"uhrer$^\ddagger$ \and
	Norbert~Heuer$^\ddagger$ \and Michael~Karkulik
	\thanks{Facultad de Matem\'aticas, Pontificia Universidad Cat\'olica de Chile,
	  Avenida Vicu\~na Mackenna 4860, Macul, Santiago, Chile,
	  email: {\tt \{tofuhrer,nheuer,mkarkulik\}@mat.puc.cl}}}
\begin{document}
\maketitle
\begin{abstract}
  We develop an ultra-weak variational formulation of a fractional advection diffusion problem in
one space dimension and prove its well-posedness. Based on this formulation, we define
a DPG approximation with optimal test functions and show its quasi-optimal convergence.
Numerical experiments confirm expected convergence properties, for uniform and
adaptively refined meshes.

\bigskip
\noindent
{\em Key words}: fractional diffusion, Riemann-Liouville fractional integral, DPG method with optimal test functions,
                 ultra-weak formulation

\noindent
{\em AMS Subject Classification}: 65N30
\end{abstract}
\section{Introduction}
In this paper we develop a discontinuous Petrov-Galerkin (DPG) method with optimal test
functions for a one-dimensional fractional advection diffusion problem of the form
\begin{align}\label{eq:model}
  \begin{split}
    -D D^{\alpha-2}D u + b Du + cu &= f \quad\text{ on } I := (0,1),\\
    u(0)=u(1)&=0.
  \end{split}
\end{align}
Here, $D$ denotes a single spatial derivative, and $D^{\alpha-2}$, for $\alpha\in(1,2)$, represents
a fractional integral operator of order $\alpha-2$. Throughout, we assume that
$c\in L^\infty([0,1])$, $b\in C^1([0,1])$, and $c-Db/2\geq 0$.

Fractional advection diffusion equations have been receiving increased attention over the past decade as modeling 
equations for physical phenomena in such areas as contaminant transport in ground water flow \cite{ben001},
viscoelasticity \cite{mai971}, turbulent flow \cite{mai971, shl871}, and chaotic dynamics \cite{zas931}. As most
models involving fractional order differential equations do not have closed form solutions particular attention has been paid
to the development of numerical approximation schemes for these equations. Two phenomena of fractional order
differential equations which impact their numerical discretization and approximation are: (i) the fractional differential
operator is nonlocal (leading to a dense coefficient matrix), and (ii) the (typical) low regularity of the solution (leading to
slow convergence of the numerical solution to the true solution).

The first approximation methods investigated for fractional order differential equations were finite difference schemes
proposed by Liu, Ahn and Turner \cite{liu041}, and Meerschaert and Tadjeran \cite{mee041}, (see also
\cite{tad071, cui091, wan121}). Subsequently, finite element \cite{ErvinR_NumPDE_06, wan131, liu111, Xu141, JinLPR_Mcom_15} 
and spectral methods \cite{li121, wan151, zay151} have been developed for the approximation of 
fractional order differential equations. We note that a finite difference approximation using the Gr\"{u}nwald formula on
a uniform mesh leads to a Toeplitz like matrix which significantly reduces the storage required for the coefficient matrix,
and whose linear system can be very efficiently solved using a fast Fourier transform \cite{wan121}.

Fractional diffusion problems are inherently difficult to analyze and with our method we open a
way to deal with singularly perturbed cases (not considered here).
In fact, principal objective of the DPG method is to provide robust discretizations of singularly perturbed
problems like convection diffusion~\cite{DemkowiczH_13,BroersenS_14_IMA,ChanHBD_14,BramwellDGQ_12_elasticity}
and wave problems \cite{zmdgpc11}.
The DPG method with optimal test functions has been developed by Demkowicz, Gopalakrishnan and co-workers.
In its most common form it combines several ideas. These are ultra-weak variational formulations
(cf.~\cite{Despres_94_SUF,CessenatD_98_UWF}) with additional trace and flux
unknowns (cf.~\cite{BottassoMS_02_DPG}), and the utilization of specific test functions which
are designed for stability (cf. the SUPG method in~\cite{HughesB_79}
and test functions in~\cite{BarrettM_84}).
Demkowicz and Gopalakrishnan combine these
ideas in a discontinuous setting and by employing problem-tailored norms. Appropriately combined, the resulting
DPG method with optimal test functions delivers robust error control and also gives
access to localized a posteriori error estimation (or rather calculation).
For details we refer to \cite{dg11,DemkowiczG_11_CDP}.
In this paper we follow precisely these steps to deal with equations involving fractional diffusion.
By writing~\eqref{eq:model} as a first-order system, cf.~\eqref{eq:fos}, we develop an ultra-weak
variational formulation in Section~\ref{section:uw} below.
While a weak formulation of (\ref{eq:model})
leads to a non-symmetric, coercive bilinear form,
for the DPG method with optimal test functions the resulting variational formulation is always symmetric,
positive definite, implying existence of a unique solution.
This is the central result of the DPG method with optimal test functions,
stated below in Theorem~\ref{thm:dpg}.
Necessary conditions for its application are the well-known Babu\v{s}ka-Brezzi
conditions~\eqref{eq:bb}, which we check in Section~\ref{section:technicalresults}
for our ultra-weak formulation.
A central step will be to extend Riemann-Liouville fractional integral operators to negative
order Sobolev spaces and prove their ellipticity. To that end, we extend recent results
from~\cite{JinLPR_Mcom_15}.
In our main result, Theorem~\ref{thm:main}, we show well-posedness of the underlying
ultra-weak variational formulation and quasi-optimal convergence of the discrete scheme.
In particular, we will gain access to error control and adaptivity.
In Section~\ref{section:numerics}, we report on several numerical experiments that
illustrate convergence orders of variants with uniform meshes and with adaptively refined meshes.

We note that in \cite{WangYZ_PGFractional_CMAME_15} the authors propose a simplified Petrov-Galerkin method
with optimal test functions for fractional diffusion. They stick to discrete spaces with continuous functions
and calculate test functions globally. In contrast, we develop the fully discontinuous variant
that allows for local calculations of test functions. This is particularly important for fractional-order
problems where inner products are defined by double integrals so that global calculations are prohibitively costly.
Let us also mention that there is DPG-technology available for hypersingular integral
equations~\cite{HeuerP_DPG_SINUM_14,HeuerK_14_DPGBEM}. Hypersingular operators are of order one with energy spaces
of order $1/2$. For closed curves/surfaces, DPG theory can be established with integer-order Sobolev spaces and
is then simpler in a certain way. For open curves/surfaces however, one has to return to non integer-order spaces.
The case of hypersingular operators can be seen as a limit of fractional diffusion operators with orders
between one and two, as considered in this paper.

\section{Mathematical setting and main results}
We use the widespread notation $A \lesssim B$ to denote the fact that $A \leq C \cdot B$ where
$C>0$ does not depend on any quantities of interest.
By $A\simeq B$ we mean that both $A \lesssim B$ and $B \lesssim A$ hold.
Throughout, suprema are taken over the indicated sets \textit{except} 0.
\subsection{DPG method with optimal test functions}
We briefly recall the premises and results of the DPG method with optimal test
functions, cf.~\cite{dg11,DemkowiczG_11_CDP,zmdgpc11}. Given a Banach space
$U$, a Hilbert space $V$, and a bilinear form $b:U\times V\rightarrow \R$, we
consider the following three conditions:
\begin{subequations}\label{eq:bb}
  \begin{align}\label{eq:bb:1}
    b(\uu,\vv)= 0 \text{ for all } \vv\in V \implies \uu = 0;
  \end{align}
  there is a positive constant$\definec{infsup}$ $\c{infsup}$ such that
  \begin{align}\label{eq:bb:2}
    \c{infsup}\norm{\vv}{V} \leq \sup_{\uu\in U}
    \frac{b(\uu,\vv)}{\norm{\uu}{U}}\quad \text{ for all } \vv\in V;
  \end{align}
  there is a positive constant$\definec{stab}$ $\c{stab}$ such that
  \begin{align}\label{eq:bb:3}
    b(\uu,\vv)\leq \c{stab}\norm{\uu}{U}\norm{\vv}{V}
    \qquad\text{ for all } \uu\in U, \vv\in V.
  \end{align}
\end{subequations}
Define the so-called trial-to-test operator $\TtoT:U\rightarrow V$ by
\begin{align}\label{eq:ttot}
  \dual{\TtoT \uu}{\vv}_V = b(\uu,\vv)\quad \text{ for all }\vv\in V.
\end{align}
The following result is central to the DPG method and is, in the end, consequence of the Babu\v{s}ka-Brezzi
theory~\cite{b70,b74,xz03},~cf.~\cite{dg11} and related references given in the introduction.

\begin{theorem}\label{thm:dpg}
  Suppose that~\eqref{eq:bb:1}--\eqref{eq:bb:3} hold
  for a Banach space $U$, a Hilbert space $V$,
  and a bilinear form $b:U\times V\rightarrow \R$.
  Then, an equivalent norm on $U$ is given by
  \begin{align*}
    \norm{\uu}{E} := \sup_{\vv\in V}\frac{b(\uu,\vv)}{\norm{\vv}{V}},
    \quad\text{ and }\quad \c{infsup}\norm{\uu}{U} \leq \norm{\uu}{E}.
  \end{align*}
  Furthermore, for any $\ell\in V'$, the problem
  \begin{align}\label{thm:dpg:eq:weak}
    \text{ find } \uu\in U \text{ such that }\quad
    b(\uu,\vv) = \ell(\vv)\quad \text{ for all }\vv\in V
  \end{align}
  has a unique solution, and
  \begin{align}\label{thm:dpg:eq:stab}
    \norm{\uu}{E} \leq \norm{\ell}{V'}.
  \end{align}
  In addition, if $U_\hp\subset U$ is a finite-dimensional subspace, then the
  problem
  \begin{align}\label{thm:dpg:eq:discrete}
    \text{ find } \uu_\hp\in U_\hp \text{ such that }\quad
    b(\uu_\hp,\vv_\hp) = \ell(\vv_\hp)\quad \text{ for all }\vv_\hp\in \TtoT(U_\hp)
  \end{align}
  has a unique solution, and
  \begin{align}\label{thm:dpg:eq:approx}
    \norm{\uu-\uu_\hp}{E} = \inf_{\uu_\hp'\in
    U_\hp}\norm{\uu-\uu_\hp'}{E}.
  \end{align}
\end{theorem}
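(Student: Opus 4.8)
The plan is to encode the bilinear form in the operator $B:U\to V'$ given by $\dual{B\uu}{\vv}:=b(\uu,\vv)$ and to recognise the energy norm as an operator norm. First I would note that, since $V$ is a Hilbert space, the Riesz characterisation of the norm together with the definition~\eqref{eq:ttot} of $\TtoT$ gives
\[
  \norm{\uu}{E}=\sup_{\vv\in V}\frac{\dual{\TtoT\uu}{\vv}_V}{\norm{\vv}{V}}=\norm{\TtoT\uu}{V}=\norm{B\uu}{V'}.
\]
The boundedness~\eqref{eq:bb:3} then yields at once $\norm{\uu}{E}\le\c{stab}\norm{\uu}{U}$, so that only the lower bound remains for the asserted norm equivalence.

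For the lower bound $\c{infsup}\norm{\uu}{U}\le\norm{\uu}{E}$ and for well-posedness of~\eqref{thm:dpg:eq:weak} I would appeal to the Babu\v{s}ka-Brezzi/closed-range machinery. Condition~\eqref{eq:bb:1} says precisely that $B$ is injective, whereas~\eqref{eq:bb:2}, rewritten through the adjoint $B':V\to U'$ as $\c{infsup}\norm{\vv}{V}\le\norm{B'\vv}{U'}$, says that $B'$ is bounded below, hence injective with closed range. By the closed range theorem the injectivity of $B'$ makes $B$ surjective, and the injectivity of $B$ makes $B'$ surjective; thus both $B$ and $B'$ are isomorphisms. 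The displayed lower bound for $B'$ then reads $\norm{(B')^{-1}}{}\le 1/\c{infsup}$, and since $\norm{B^{-1}}{}=\norm{(B')^{-1}}{}$ we obtain $\c{infsup}\norm{\uu}{U}\le\norm{B\uu}{V'}=\norm{\uu}{E}$. In particular $B\uu=\ell$ has the unique solution $\uu=B^{-1}\ell$, with $\norm{\uu}{E}=\norm{B\uu}{V'}=\norm{\ell}{V'}$, which is~\eqref{thm:dpg:eq:stab}. I expect this to be the crux of the argument, as it is the only place where reflexivity of $V$ and the closed range theorem are genuinely used; the remaining steps are essentially algebraic.

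For the discrete problem I would first use that $\TtoT$ is injective (again by~\eqref{eq:bb:1}) to introduce the energy inner product $\vdual{\uu}{\ww}_E:=\dual{\TtoT\uu}{\TtoT\ww}_V$, which induces $\norm{\cdot}{E}$. The decisive identity is obtained by testing~\eqref{eq:ttot} with $\vv=\TtoT\ww$: for all $\uu,\ww\in U$ one has $b(\uu,\TtoT\ww)=\dual{\TtoT\uu}{\TtoT\ww}_V=\vdual{\uu}{\ww}_E$. Writing every element of $\TtoT(U_\hp)$ as $\TtoT\ww_\hp$ with $\ww_\hp\in U_\hp$, problem~\eqref{thm:dpg:eq:discrete} is equivalent to finding $\uu_\hp\in U_\hp$ with $\vdual{\uu_\hp}{\ww_\hp}_E=\ell(\TtoT\ww_\hp)$ for all $\ww_\hp\in U_\hp$. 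This is a symmetric positive-definite system on the finite-dimensional space $U_\hp$ and is therefore uniquely solvable.

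Finally, for the quasi-optimality~\eqref{thm:dpg:eq:approx} I would establish Galerkin orthogonality in $\vdual{\cdot}{\cdot}_E$. Inserting $\vv=\TtoT\ww_\hp$ into the exact equation~\eqref{thm:dpg:eq:weak} and using the same identity gives $\vdual{\uu}{\ww_\hp}_E=b(\uu,\TtoT\ww_\hp)=\ell(\TtoT\ww_\hp)=\vdual{\uu_\hp}{\ww_\hp}_E$ for all $\ww_\hp\in U_\hp$, so that $\uu-\uu_\hp$ is $E$-orthogonal to $U_\hp$. Hence $\uu_\hp$ is the $\vdual{\cdot}{\cdot}_E$-orthogonal projection of $\uu$ onto $U_\hp$, and~\eqref{thm:dpg:eq:approx} follows from the Pythagorean identity.
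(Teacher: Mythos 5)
Your proof is correct. Note that the paper does not prove Theorem~\ref{thm:dpg} at all---it attributes the result to the Babu\v{s}ka--Brezzi theory and the cited DPG references---and your argument (energy norm as $\norm{B\uu}{V'}$, closed range theorem to get that $B$ and its adjoint are isomorphisms with $\norm{B^{-1}}{}\leq 1/\c{infsup}$, and reinterpretation of the discrete Petrov--Galerkin scheme as the $E$-orthogonal projection via $b(\uu,\TtoT\ww)=\dual{\TtoT\uu}{\TtoT\ww}_V$) is exactly the standard proof those sources supply, so it matches the approach the paper points to.
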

\subsection{Sobolev spaces}
For $s\in\R$ with $s\geq 0$ and an open interval $M=(a,b)\subseteq\R$, the Sobolev spaces $H^s(M)$
are defined via distributional derivatives and the Sobolev-Slobodeckij seminorm
$\snorm{\cdot}{H^s(M)}$ and norm $\norm{\cdot}{H^s(M)}$.
The space $\wilde H^s(M)$ is defined as the space of functions whose extension by zero is in $H^s(\R)$.
The space $H^{-s}(M)$ denotes the topological dual space of
$\wilde H^s(M)$, while $\wilde H^{-s}(M)$ denotes the dual of $H^s(M)$.
For a finite partition $\TT$ of $I=(0,1)$ into open, disjoint, and connected sets, we define
$H^s(\TT) := \prod_{\el\in\TT}H^s(\el)$, or, likewise,
$\wilde H^s(\TT) := \prod_{\el\in\TT}\wilde H^s(\el)$, with product norms
$\norm{v}{H^s(\TT)}^2 := \sum_{\el\in\TT}\norm{v|_\el}{H^s(\el)}^2.$
We also write $\wilde H^{-s}(\TT)$ or $H^{-s}(\TT)$ for the duals of product
spaces.
By $N:=\#\TT$ we denote the number of elements in the partition
and for $v\in H^s(\TT)$, $1/2<s$, we define the jump $\jump{v}\in\R^{N+1}$
as the vector of the differences of the traces of $v$ on the elements to the right and to the
left of all nodes $x=\overline{T_-}\cap \overline{T_+}$.
For the boundary nodes (i.e., $0$ and $1$), we just take traces.
For $v\in H^s(\TT)$, $1/2<s$, we also define the average $\avg{v}$ 
as the vector of mean values of the traces of $v$ on the elements to the right and to the left
of all nodes.
We will need certain results for this kind of spaces. 
From now on, we assume that partitions are quasi-uniform, i.e.,
for all $T\in\TT$ holds $\snorm{T}{}\simeq N^{-1}$ for $N:=\#\TT$ being
the number of elements in the partition $\TT$, and the constant involved
is independent of $\TT$.
We denote by $D_\TT$ the $\TT$-piecewise distributional derivative.
\begin{lemma}\label{lemma:scaling}
  The following statements hold with constants which only depend on $s$:
  \begin{itemize}
    \item Let $s\in(0,1/2)$. There holds
      \begin{align}\label{lemma:scaling:1}
        \norm{v}{H^s(I)} \lesssim N^{s}\norm{v}{H^{s}(\TT)}
        \qquad\text{ for all }v\in H^s(I).
      \end{align}
    \item Let $s\in(1/2,1)$. There holds
      \begin{align}\label{lemma:scaling:2}
        \norm{D_\TT v}{\wilde H^{s-1}(I)} \lesssim N^{1-s} \snorm{v}{H^s(\TT)}
        \qquad\text{ for all } v\in H^s(\TT).
      \end{align}
    \item Let $s\in (1/2,1]$. There holds
      \begin{align}\label{lemma:scaling:3}
	\abs{\jump{v}} \lesssim N^{1/2} \norm{v}{H^s(\TT)}
        \qquad\text{ for all } v\in H^s(\TT).
      \end{align}
  \end{itemize}
\end{lemma}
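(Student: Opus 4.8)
The plan is to prove all three estimates by the same template: establish each bound on a fixed reference configuration (a single element, or a two-element patch) where it holds with a constant depending only on $s$ and no power of $N$, then transport it by the affine map $\wat\el\to\el$, $\wat x\mapsto a+h_\el\wat x$ with $h_\el\simeq N^{-1}$, carefully tracking how each norm transforms, and finally sum over $\el\in\TT$ using quasi-uniformity. The mechanism producing the factors of $N$ is that the two parts of a fractional norm scale differently, namely $\snorm{v}{H^s(\el)}^2\simeq h_\el^{1-2s}\snorm{\wat v}{H^s(\wat\el)}^2$ while $\norm{v}{L^2(\el)}^2=h_\el\norm{\wat v}{L^2(\wat\el)}^2$, so in each case one must identify the dominant contribution.

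I would dispatch~\eqref{lemma:scaling:3} first, as it is the most direct. For $s>1/2$ the point evaluation at an endpoint is bounded on $H^s(\wat\el)$; scaling shows $\abs{v(\text{node})}^2\lesssim h_\el^{-1}\norm{v}{L^2(\el)}^2+h_\el^{2s-1}\snorm{v}{H^s(\el)}^2\lesssim N\norm{v}{H^s(\el)}^2$, the $L^2$ term dominating because $h_\el^{2s-1}\lesssim 1\lesssim h_\el^{-1}$. Since each entry of $\jump{v}$ is a difference of two such traces and each element abuts two nodes, squaring and summing over all nodes yields $\abs{\jump{v}}^2\lesssim N\sum_{\el\in\TT}\norm{v}{H^s(\el)}^2=N\norm{v}{H^s(\TT)}^2$.

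For~\eqref{lemma:scaling:2} I would first remove the global coupling by duality. Since $\wilde H^{s-1}(I)=(H^{1-s}(I))'$ and the broken $H^{1-s}$-norm is bounded by the global one, testing $D_\TT v$ against $\phi\in H^{1-s}(I)$ and applying the Cauchy--Schwarz inequality over the elements gives $\norm{D_\TT v}{\wilde H^{s-1}(I)}\le\norm{D_\TT v}{\wilde H^{s-1}(\TT)}=(\sum_{\el\in\TT}\norm{Dv}{\wilde H^{s-1}(\el)}^2)^{1/2}$. It then remains to estimate each elementwise negative norm. On the reference element one has $\norm{D\wat v}{\wilde H^{s-1}(\wat\el)}\lesssim\snorm{\wat v}{H^s(\wat\el)}$, which (the derivative annihilating constants) is a quotient/Poincar\'e-type equivalence obtained by a standard compactness argument. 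For the scaling, the derivative pairing $\langle Dv,\psi\rangle_\el=\langle D\wat v,\wat\psi\rangle_{\wat\el}$ is scale-invariant, while for $\sigma:=1-s\in(0,1/2)$ the lower bound $\norm{\psi}{H^\sigma(\el)}^2\ge\min(h_\el,h_\el^{1-2\sigma})\norm{\wat\psi}{H^\sigma(\wat\el)}^2=h_\el\norm{\wat\psi}{H^\sigma(\wat\el)}^2$ (since $h_\el<1$ and $1-2\sigma<1$) yields $\norm{Dv}{\wilde H^{s-1}(\el)}\lesssim h_\el^{-1/2}\snorm{\wat v}{H^s(\wat\el)}\lesssim h_\el^{s-1}\snorm{v}{H^s(\el)}\simeq N^{1-s}\snorm{v}{H^s(\el)}$. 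Summing over the elements gives the claim.

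The estimate~\eqref{lemma:scaling:1} is where the nonlocality of the Sobolev--Slobodeckij seminorm bites, and I expect its adjacent-element contribution to be the main obstacle. Writing $\snorm{v}{H^s(I)}^2$ as its diagonal part, which equals $\snorm{v}{H^s(\TT)}^2$, plus the double sum of cross terms over distinct elements, I would split the latter into non-adjacent and adjacent pairs. For non-adjacent pairs $\abs{x-y}\gtrsim N^{-1}$, so bounding $\abs{v(x)-v(y)}^2\le 2\abs{v(x)}^2+2\abs{v(y)}^2$ and integrating $\int_{\abs{x-y}\gtrsim N^{-1}}\abs{x-y}^{-1-2s}\,dy\simeq N^{2s}$ produces the bound $\lesssim N^{2s}\norm{v}{L^2(I)}^2$. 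The adjacent pairs cannot be treated this way because of the singularity at the shared node; instead I would use that for $s<1/2$ no trace compatibility is required, so on a fixed two-element reference patch the cross term is controlled by the broken seminorm plus the $L^2$-norm. Scaling this patch estimate to size $h_\el\simeq N^{-1}$ converts the $L^2$ part into a factor $N^{2s}$, and summing over nodes with finite overlap bounds the adjacent contribution by $\snorm{v}{H^s(\TT)}^2+N^{2s}\norm{v}{L^2(I)}^2$. Combining the three pieces and using $\norm{v}{H^s(\TT)}^2=\norm{v}{L^2(I)}^2+\snorm{v}{H^s(\TT)}^2$ together with $N^{2s}\ge 1$ gives $\norm{v}{H^s(I)}^2\lesssim N^{2s}\norm{v}{H^s(\TT)}^2$, as claimed.
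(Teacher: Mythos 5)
Your proposal is correct, and for the second and third estimates it is essentially the paper's argument: \eqref{lemma:scaling:3} is the Sobolev embedding on a reference element combined with the observation that the $L^2$-part of the scaled norm dominates and contributes the factor $N^{1/2}$, and \eqref{lemma:scaling:2} is the elementwise duality bound $\norm{Dv}{\wilde H^{s-1}(T)}\lesssim N^{1-s}\snorm{v}{H^s(T)}$ obtained from a quotient-space (Poincar\'e-type) estimate on the reference element plus the scaling of the dual norm; the paper carries out exactly this transformation, additionally noting that $\wilde H^{s-1}(\wat T)$ and $H^{s-1}(\wat T)$ coincide on the reference element because their preduals do for the order $1-s\in(0,1/2)$.

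Where you genuinely diverge is \eqref{lemma:scaling:1}. The paper does not decompose the Slobodeckij double integral at all: it passes through the chain $\norm{v}{H^s(I)}\lesssim\norm{v}{\wilde H^s(I)}$, then invokes a gluing (superadditivity) lemma for the $\wilde H^s$-norm with respect to subdomains \cite[Lemma~20]{FeischlFHKP_ABEM_ARCME_14} to get $\norm{v}{\wilde H^s(I)}^2\lesssim\sum_{T}\norm{v}{\wilde H^s(T)}^2$, and finally uses the local equivalence $\norm{\wat v}{\wilde H^s(\wat T)}\lesssim\norm{\wat v}{H^s(\wat T)}$ (valid only for $s<1/2$, by Grisvard) together with scaling to produce $N^{s}$ on each element. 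Your route --- diagonal part equals the broken seminorm, non-adjacent pairs handled by $\int_{|x-y|\gtrsim N^{-1}}|x-y|^{-1-2s}\,dy\simeq N^{2s}$, adjacent pairs by a two-element reference-patch estimate --- is more elementary and self-contained, and all the scalings check out. Be aware, though, that your patch claim ``for $s<1/2$ the cross term over adjacent elements is controlled by the broken seminorm plus the $L^2$-norm'' is precisely the equivalence of the broken and unbroken $H^s$-norms on a fixed two-element configuration, which in turn is the Hardy-type inequality behind $H^s=\wilde H^s$ for $s<1/2$; so you have not eliminated the paper's key analytic ingredient, only localized it to pairs of neighbours (and you should note that quasi-uniformity keeps the length ratio of adjacent elements in a compact set, so the patch constant is uniform). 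What your version buys is independence from the global gluing lemma; what the paper's version buys is brevity, at the price of two external citations.
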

\begin{proof}
  The first statement is seen as follows: First, for $\wat T$ a reference
  interval with fixed diameter, there is a constant $C_s>0$ such that
  $\norm{\wat v}{\wilde H^s(\wat T)} \leq C_s \norm{\wat v}{H^s(\wat T)}$,
  cf.~\cite{Grisvard_EllipticProblems_85} and~\cite[Proof of
  Lemma~5]{Heuer_ASMSLO_NumerMath_01}. Second, scaling arguments show
  that $\norm{v}{\wilde H^s(T)} \lesssim N^s \norm{v}{H^s(T)}$ for all
  $T\in\TT$. Now,
  \begin{align}\label{eq:broken}
    \norm{v}{H^s(I)}^2 \lesssim \norm{v}{\wilde H^{s}(I)}^2
    \lesssim \sum_{T\in\TT}\norm{v}{\wilde H^s(T)}^2,
  \end{align}
  where the second estimate follows
  from~\cite[Lemma~20]{FeischlFHKP_ABEM_ARCME_14}.
  To show the second statement, we proceed as before and use an affine transformation
  on every element $T\in\TT$,
  \begin{align*}
    \norm{Dv}{\wilde H^{s-1}(T)}^2 \lesssim N \norm{\wat D \wat v}{\wilde H^{s-1}(\wat T)}^2
    \lesssim N \norm{\wat D\wat v}{H^{s-1}(\wat T)}^2.
  \end{align*}
  Here the second estimate follows as the norms involved are dual
  to norms on which we can use~\cite{Grisvard_EllipticProblems_85}
  and~\cite[Proof of Lemma~5]{Heuer_ASMSLO_NumerMath_01}. A quotient space
  argument on the reference element $\wat T$, cf.~\cite{Heuer_Equivalence_JMAA_14}, shows
  \begin{align*}
    \norm{\wat D\wat v}{H^{s-1}(\wat T)} \lesssim \inf_{c\in\R} \norm{\wat v+c}{H^s(\wat T)}
    \simeq \snorm{\wat v}{H^s(\wat T)}.
  \end{align*}
  The second statement follows by application of the scaling argument
  $\snorm{\wat v}{H^s(\wat T)}^2 \lesssim N^{1-2s}\snorm{v}{H^s(T)}^2$.
  The third statement follows easily from, e.g.,
  \begin{align*}
    \abs{v(x)} \leq \norm{v}{L^\infty(\wat T)} \lesssim \norm{\wat v}{H^s(\wat T)}
    \lesssim N^{1/2}\norm{v}{H^s(T_+)}.
  \end{align*}
  Here, for example, $x = \overline{T_-}\cap \overline{T_+}$, the second estimate follows
  by the Sobolev Embedding theorem, and the third one again by a scaling argument.
\end{proof}

\begin{lemma}\label{lemma:poincare:pw}
  There holds
  \begin{align*}
    \norm{\tau}{L_2(I)} \lesssim \norm{D_\TT \tau}{L_2(I)} + N^{1/2}\abs{\jump{\tau}}
    \qquad\text{ for all } \tau\in H^1(\TT),
  \end{align*}
  and the hidden constant is independent of $\TT$.
\end{lemma}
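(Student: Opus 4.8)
The plan is to reconstruct $\tau$ pointwise from its piecewise derivative and its jumps by telescoping across the partition, and then to read off the two terms on the right-hand side from a single Cauchy--Schwarz estimate. Enumerate the nodes as $0=x_0<x_1<\dots<x_N=1$, write $\el_i=(x_{i-1},x_i)$, and recall that $\tau|_{\el_i}\in H^1(\el_i)$ has a continuous representative on $\overline{\el_i}$, so all one-sided traces are well defined. Denoting by $\jump{\tau}_j$ the $j$-th component of $\jump{\tau}$ (the trace at $x=0$ for $j=0$, and the right-minus-left jump at the interior node $x_j$ for $1\le j\le N-1$), the fundamental theorem of calculus on $\el_i$ together with a telescoping sum from the left boundary gives, for $x\in\el_i$,
\begin{align*}
  \tau(x) = \sum_{j=0}^{i-1}\jump{\tau}_j + \sum_{k=1}^{i-1}\int_{\el_k} D_\TT\tau + \int_{x_{i-1}}^x D_\TT\tau .
\end{align*}
This identity is the heart of the argument: the accumulated jumps account for the loss of continuity across nodes and for the boundary value at $x=0$, while the accumulated integrals carry the contribution of the derivative.

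Next I would estimate the two groups of terms separately. For the jump part, the triangle inequality and Cauchy--Schwarz over the at most $N$ indices give $\sum_{j=0}^{i-1}\abs{\jump{\tau}_j}\le N^{1/2}\abs{\jump{\tau}}$, which is exactly where the factor $N^{1/2}$ originates. For the derivative part, bounding each $\int_{\el_k}D_\TT\tau$ by $\abs{\el_k}^{1/2}\norm{D_\TT\tau}{L_2(\el_k)}$ and applying Cauchy--Schwarz in $k$ yields
\begin{align*}
  \sum_{k=1}^{i-1}\Bigl|\int_{\el_k}D_\TT\tau\Bigr| + \Bigl|\int_{x_{i-1}}^x D_\TT\tau\Bigr|
  \le \Bigl(\sum_{k=1}^{i}\abs{\el_k}\Bigr)^{1/2}\norm{D_\TT\tau}{L_2(I)} \le \norm{D_\TT\tau}{L_2(I)},
\end{align*}
where the crucial point is that $\sum_{k}\abs{\el_k}\le\abs{I}=1$ independently of $N$, so the derivative term carries no $N$-dependent factor. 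Combining the two estimates gives the pointwise bound $\abs{\tau(x)}\le N^{1/2}\abs{\jump{\tau}}+\norm{D_\TT\tau}{L_2(I)}$ for every $x\in I$.

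Finally, integrating this pointwise bound over $I$ (whose length is $1$) and taking square roots yields the claim with an absolute constant; note that quasi-uniformity is not even needed for this particular estimate. The argument is essentially routine in one dimension, so I do not anticipate a serious obstacle. The only points requiring care are the bookkeeping of signs and indices in the telescoping identity, and the observation that the derivative integrals must be grouped and estimated by a \emph{single} Cauchy--Schwarz step (rather than element by element) in order to keep the derivative term free of any power of $N$.
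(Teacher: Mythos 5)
Your proof is correct, but it takes a genuinely different route from the paper. The paper argues by duality: it takes the weak solution $\phi\in\wilde H^1(I)$ of $-D^2\phi=\tau$, writes $\vdual{\tau}{\tau}=-\vdual{\tau}{D^2\phi}=\vdual{D_\TT\tau}{D\phi}+\dual{\jump{\tau}}{D\phi}$ by elementwise integration by parts, and then closes the estimate with Cauchy--Schwarz, the nodal-trace scaling bound of Lemma~\ref{lemma:scaling}, eq.~\eqref{lemma:scaling:3} (which supplies the factor $N^{1/2}$ and relies on quasi-uniformity), and the elliptic regularity $\norm{D\phi}{H^1(I)}\lesssim\norm{\tau}{L_2(I)}$. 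You instead reconstruct $\tau$ pointwise by the fundamental theorem of calculus and a telescoping sum over the jumps, obtaining the stronger $L^\infty$ bound $\abs{\tau(x)}\le N^{1/2}\abs{\jump{\tau}}+\norm{D_\TT\tau}{L_2(I)}$, from which the $L^2$ bound follows since $\abs{I}=1$; your identity, sign conventions (including the convention that the jump at the boundary node $0$ is just the trace), and the two Cauchy--Schwarz steps are all sound, and the single grouped Cauchy--Schwarz over $\sum_k\abs{\el_k}\le 1$ is indeed what keeps the derivative term free of $N$. What each approach buys: yours is more elementary, self-contained, and, as you correctly observe, does not use quasi-uniformity, since the $N^{1/2}$ comes purely from Cauchy--Schwarz over at most $N+1$ jump components; the paper's duality argument is the one that would survive in higher dimensions, where no telescoping reconstruction of $\tau$ from $D_\TT\tau$ and its jumps is available.
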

\begin{proof}
  Let $\phi\in \wilde H^1(I)$ be the weak solution of $-D^2 \phi=\tau$.
  Then $D\phi \in H^1(I)$ with distributional derivative $D^2\phi=-\tau$, and
  integration by parts yields
  \begin{align*}
    \vdual{\tau}{\tau} = - \vdual{\tau}{D^2\phi} = \vdual{D_\TT \tau}{D\phi}
     + \dual{\jump{\tau}}{D\phi}.
  \end{align*}
  Cauchy-Schwarz and Lemma~\ref{lemma:scaling}, eq.~\eqref{lemma:scaling:3} imply
  \begin{align*}
    \norm{\tau}{L_2(I)}^2
    \lesssim \norm{D_\TT\tau}{L_2(I)} \norm{D\phi}{L_2(I)}
    + N^{1/2}\abs{\jump{\tau}} \norm{D\phi}{H^1(I)}.
  \end{align*}
  By construction, $\norm{D\phi}{H^1(I)} \lesssim \norm{\tau}{L_2(I)}$, which
  concludes the proof.
\end{proof}
We will need the following result on fractional seminorms.
\begin{lemma}\label{lemma:poincare}
  Let $s\in (0,1)$ be fixed. There holds
  \begin{align*}
    \snorm{u}{H^s(I)} \lesssim \norm{Du}{H^{s-1}(I)}
    \quad\text{ for all } u\in H^s(I).
  \end{align*}
  where $Du$ is the distributional derivative of $u$. The hidden
  constant does not depend on $I$.
\end{lemma}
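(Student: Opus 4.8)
The plan is to prove the equivalent, slightly sharper statement that $D$ is a bounded-below map from the quotient $H^s(I)/\R$ (equipped with the seminorm) into $H^{s-1}(I)$; the asserted inequality is exactly the nontrivial half of this. I would first record the elementary half, namely that $D\colon H^s(I)\to H^{s-1}(I)$ is bounded: this follows by real interpolation of the bounded maps $D\colon H^1(I)\to L_2(I)$ and $D\colon L_2(I)\to H^{-1}(I)$, and since $D$ annihilates constants it descends to $\norm{Du}{H^{s-1}(I)}\lesssim\inf_{c\in\R}\norm{u+c}{H^s(I)}$. The real work is the reverse bound.

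Next I would replace the seminorm on the left by a genuine quotient norm. On the bounded interval the Poincar\'e--Wirtinger inequality $\norm{u-\bar u}{L_2(I)}\lesssim\snorm{u}{H^s(I)}$ holds (its kernel is the constants and the embedding $H^s(I)\hookrightarrow L_2(I)$ is compact), whence $\inf_{c\in\R}\norm{u+c}{H^s(I)}=\big(\snorm{u}{H^s(I)}^2+\norm{u-\bar u}{L_2(I)}^2\big)^{1/2}\simeq\snorm{u}{H^s(I)}$. Thus it suffices to prove the quotient-norm equivalence $\inf_{c\in\R}\norm{u+c}{H^s(I)}\lesssim\norm{Du}{H^{s-1}(I)}$, i.e.\ that $D$ is bounded below modulo constants. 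This is precisely the quotient-space norm equivalence on the reference interval that was already invoked in the proof of Lemma~\ref{lemma:scaling} (cf.~\cite{Heuer_Equivalence_JMAA_14}); the statement for $I=(0,1)$, with a constant depending only on $s$, then follows by an affine scaling argument as in that lemma.

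The main obstacle is the lower bound itself, i.e.\ the closed-range half of the quotient equivalence. The whole-line Fourier picture explains both why it is true and why it is delicate: there $\snorm{u}{H^s(\R)}^2\simeq\int|\xi|^{2s}|\hat u|^2$ while $\norm{Du}{H^{s-1}(\R)}^2\simeq\int(1+\xi^2)^{s-1}\xi^2|\hat u|^2$, and the two multipliers agree for large $|\xi|$ but the ratio $|\xi|^{2s}/\big((1+\xi^2)^{s-1}\xi^2\big)\to\infty$ as $\xi\to0$, so no such estimate can hold globally. The bound is therefore genuinely a bounded-domain (Poincar\'e-type) phenomenon that must exploit the finiteness of $I$ to tame low frequencies. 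If one prefers a self-contained route to the cited equivalence, I would instead establish the G\aa rding-type inequality $\snorm{u}{H^s(I)}^2\lesssim\norm{Du}{H^{s-1}(I)}^2+\norm{u-\bar u}{L_2(I)}^2$ --- the analogue of the pointwise comparison $|\xi|^{2s}\lesssim(1+\xi^2)^{s-1}\xi^2+1$ --- and then remove the compact lower-order term by a Peetre--Tartar/compactness argument, using that $Du=0$ forces $u$ to be constant and hence the seminorm to vanish. Either way, isolating and controlling the low-frequency contribution on the bounded interval is the crux.
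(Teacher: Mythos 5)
Your reduction to the quotient-norm statement is fine (the Poincar\'e--Wirtinger step via compactness is standard), and you correctly identify the crux: the bounded-below property $\inf_{c\in\R}\norm{u+c}{H^s(I)}\lesssim\norm{Du}{H^{s-1}(I)}$. But neither of your two routes actually establishes it. First, this is \emph{not} the quotient-space equivalence invoked in the proof of Lemma~\ref{lemma:scaling}: what is used there (and what \cite{Heuer_Equivalence_JMAA_14} provides) is $\inf_{c}\norm{\wat v+c}{H^s(\wat T)}\simeq\snorm{\wat v}{H^s(\wat T)}$ together with the \emph{forward} bound $\norm{\wat D\wat v}{H^{s-1}(\wat T)}\lesssim\inf_c\norm{\wat v+c}{H^s(\wat T)}$ --- both statements about $u$ modulo constants, not a lower bound for $Du$ in the negative-order norm. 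Citing that equivalence does not give you the reverse inequality.

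Second, in your self-contained route the G\aa rding-type inequality $\snorm{u}{H^s(I)}^2\lesssim\norm{Du}{H^{s-1}(I)}^2+\norm{u-\overline u}{L_2(I)}^2$ is asserted by analogy with the multiplier comparison $|\xi|^{2s}\lesssim(1+\xi^2)^{s-1}\xi^2+1$, but that computation lives on $\R$, and you yourself note the whole-line picture does not close; transferring it to $I$ via an extension operator controls $\norm{D(Eu)}{H^{s-1}(\R)}$ only by $\norm{u}{H^s(I)}$, which is circular. The honest proof of this G\aa rding inequality (and hence of the lemma) is exactly what the paper supplies: a duality argument writing $u=D\psi+c$, $\psi\in\wilde H^1(I)$, gives the $s=0$ endpoint $\norm{u-\overline u}{L_2(I)}\lesssim\norm{Du}{H^{-1}(I)}$; combined with the classical $s=1$ Poincar\'e inequality, a K-functional interpolation --- with the reparametrization that restricts competitors $v$ to mean-zero functions and substitutes $w=Dv$ --- yields $\norm{u-\overline u}{H^s(I)}\lesssim\norm{Du}{[H^{-1}(I),L_2(I)]_{s,2}}\simeq\norm{Du}{H^{s-1}(I)}$ directly, without needing Peetre--Tartar. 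So the Peetre--Tartar framing is a legitimate packaging, but the essential analytic content is missing from your argument; once you fill it in, you will have reproduced the paper's interpolation proof.
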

\begin{proof}
  As $u\in L_2(I)$, it holds $Du\in H^{-1}(I)$.
  We can write $u=D\psi + c$ with $c\in\R$ and $\psi\in \wilde H^1(I)$,
  where $\norm{\psi}{\wilde H^1(I)}\lesssim\norm{u}{L_2(I)}$.
  Due to the definition of the distributional derivative we see
  \begin{align*}
    \abs{\vdual{u}{D\psi}} = 
    \abs{\vdual{Du}{\psi}} \lesssim \norm{Du}{H^{-1}(I)}\norm{\psi}{\wilde H^1(I)}
    \lesssim \norm{Du}{H^{-1}(I)} \norm{u}{L_2(I)}
  \end{align*}
  We conclude that for $u\in L_2(I)$, it holds
  \begin{align*}
    \norm{u}{L_2(I)}^2 = \vdual{u}{D\psi} + \vdual{u}{c}
    \lesssim \norm{Du}{H^{-1}(I)} \norm{u}{L_2(I)} + \vdual{u}{c}.
  \end{align*}
  Now we apply this estimate to $u-\overline u$, where $\overline u$ denotes the mean value
  of $u$, and obtain
  \begin{align}\label{lemma:poincare:-1}
    \norm{u-\overline u}{L_2(I)} \lesssim \norm{Du}{H^{-1}(I)}.
  \end{align}
  The standard Poincar\'e inequality states that
  \begin{align}\label{lemma:poincare:0}
    \norm{u-\overline u}{H^1(I)} \lesssim \norm{Du}{L_2(I)}.
  \end{align}
  The $H^s(I)$ norm can equivalently be obtained by the K-method of
  interpolation, cf.~\cite{Triebel_InterpolationSpaces}, via
  \begin{align*}
    \norm{u-\overline u}{H^s(I)}^2 \simeq \norm{u-\overline
    u}{[L_2(I),H^1(I)]_{s,2}}^2 =
    \int_0^\infty t^{-2s} \left(
    \inf_{v\in H^1(I)} \norm{u-\overline{u}-v}{L_2(I)} + t \norm{v}{H^1(I)}
    \right)^{2} \frac{dt}{t}.
  \end{align*}
  Using~\eqref{lemma:poincare:-1} and~\eqref{lemma:poincare:0}, we obtain
  \begin{align*}
    \inf_{v\in H^1(I)} \norm{u-\overline{u}-v}{L_2(I)} + t \norm{v}{H^1(I)}
    &\leq
    \inf_{\substack{v\in H^1(I)\\\overline v=0}} \norm{u-\overline{u}-v}{L_2(I)} + t
    \norm{v}{H^1(I)}\\
    &\lesssim
    \inf_{\substack{v\in H^1(I)\\\overline v=0}}
    \norm{Du-Dv}{H^{-1}(I)} + t \norm{Dv}{L_2(I)}
  \end{align*}
  Next we use that for $w\in L_2(I)$ there is a $\psi\in H^1(I)$ with $\overline\psi=0$ such that
  $D\psi=w$. We conclude
  \begin{align*}
    \norm{u-\overline u}{H^s(I)}^2 \lesssim
    \int_0^\infty t^{-2s} \left(
    \inf_{w\in L_2(I)} \norm{Du-w}{H^{-1}(I)} + t
    \norm{w}{L_2(I)}
    \right)^{2} \frac{dt}{t}.
  \end{align*}
  By definition, the right-hand side is
  $\norm{Du}{[H^{-1}(I),L_2(I)]_{s,2}}^2$, which
  is equivalent to $\norm{Du}{H^{s-1}(I)}^2$.
  This concludes the proof for a specific $I$. The hidden constant does
  not depend on $I$, which can be shown by scaling arguments.
\end{proof}
\subsection{Fractional integral operators}\label{section:fracint}
The fractional integral operators that we will use are of so-called Riemann-Liouville type.
For $\beta>0$ we denote by $_0D^{-\beta}$ and $D_1^{-\beta}$ 
the left and right-sided versions of these operators, defined on $I=(0,1)$ by
\begin{align*}
  _0D^{-\beta} u (x) := \frac{1}{\Gamma(\beta)}\int_0^x (x-s)^{\beta-1}u(s)\;ds
  \quad\text{ and }\quad
  D_1^{-\beta} u (x) := \frac{1}{\Gamma(\beta)}\int_x^1 (s-x)^{\beta-1}u(s)\;ds.
\end{align*}
We also abbreviate $D^{-\beta} := {_0}D^{-\beta}$.
A standard textbook on this kind of operators is~\cite{SamkoKM_Fractional}.
Recently, classical results regarding boundedness and ellipticity of these operators
were extended in~\cite{ErvinR_NumPDE_06,JinLPR_Mcom_15}.
In order to obtain a variational formulation suited for DPG analysis,
we need to extend these operators to negative order Sobolev spaces and
show their ellipticity. To this end, let
$\Ff$ denote the Fourier transforms on the space $\Ss'(\R)$ of
tempered distributions, cf.~\cite[Chapter 7]{Rudin_FANA}, defined by
\begin{align*}
  \Ff u(\xi) := (2\pi)^{-1/2}\int_\R u(x)e^{-ix\xi}\,dx.
\end{align*}
Choosing a space of test functions which is invariant under the action of $D^{-\beta}$ and
$D^{-\beta}$, these operators can be extended to the associated spaces of distributions,
cf.~\cite[\S8]{SamkoKM_Fractional}. In the present setting, a different argument can
be used.
\begin{lemma}\label{lemma:extension}
  For every $s\in\R$ with $-\beta\leq s$ and $\beta>0$, the operator $D^{-\beta}$
  can be extended to a bounded linear operator
  $D^{-\beta}:\wilde H^{s}(I)\rightarrow H^{s+\beta}(I)$.
\end{lemma}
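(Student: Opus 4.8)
The plan is to realize $D^{-\beta}$ as a Fourier multiplier acting on zero-extended functions, and to exploit that the assertion concerns only the restriction to the bounded interval $I$. First I would recall that for $u\in\wilde H^s(I)$ the zero extension $\wilde u$ lies in $H^s(\R)$ with $\norm{\wilde u}{H^s(\R)}=\norm{u}{\wilde H^s(I)}$ and $\mathrm{supp}\,\wilde u\subseteq[0,1]$, and that on $I$ one has $D^{-\beta}u=(k_\beta*\wilde u)|_I$ with the causal kernel $k_\beta(x)=x_+^{\beta-1}/\Gamma(\beta)$, where $x_+=\max\{x,0\}$. Reading this identity as the definition of a multiplier operator will simultaneously furnish the claimed bounded extension to the full scale $s\geq-\beta$, since the classical integral and the multiplier operator agree on smooth functions.

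The kernel $k_\beta$ is not integrable at infinity, so the global convolution $k_\beta*\wilde u$ need not lie in $H^{s+\beta}(\R)$; this is where I would use locality. Since $\mathrm{supp}\,\wilde u\subseteq[0,1]$, for $x\in I$ only the values of $k_\beta$ on $(-1,1)$ enter the convolution. Hence I replace $k_\beta$ by a compactly supported truncation $\wilde k_\beta:=\chi\,k_\beta$, where $\chi$ is smooth with $\chi\equiv1$ on $[0,1]$ and $\mathrm{supp}\,\chi\subseteq(-\infty,2)$. Then $\wilde k_\beta$ and $k_\beta$ coincide on $(-1,1)$, so $(\wilde k_\beta*\wilde u)|_I=(k_\beta*\wilde u)|_I=D^{-\beta}u$, while $\wilde k_\beta*\wilde u$ now has compact support and therefore a chance of being globally regular.

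The core estimate is then a multiplier bound. Writing $m:=\Ff\wilde k_\beta$, Plancherel gives $\norm{\wilde k_\beta*\wilde u}{H^{s+\beta}(\R)}^2\simeq\int_\R(1+\xi^2)^{s+\beta}|m(\xi)|^2|\Ff\wilde u(\xi)|^2\,d\xi$, so it suffices to prove $(1+\xi^2)^{\beta/2}|m(\xi)|\lesssim1$, i.e.\ $|m(\xi)|\lesssim(1+|\xi|)^{-\beta}$. Because $\wilde k_\beta$ is compactly supported and smooth away from the origin, $m$ is bounded near $\xi=0$; and because near $x=0$ the kernel carries exactly the homogeneous singularity $x_+^{\beta-1}/\Gamma(\beta)$, whose distributional Fourier transform is $(i\xi)^{-\beta}$ with $|(i\xi)^{-\beta}|=|\xi|^{-\beta}$ (cf.~\cite{SamkoKM_Fractional}), one expects $|m(\xi)|\simeq|\xi|^{-\beta}$ as $|\xi|\to\infty$. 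Combining the two regimes yields $\norm{\wilde k_\beta*\wilde u}{H^{s+\beta}(\R)}\lesssim\norm{\wilde u}{H^s(\R)}$. Finally, since $s+\beta\geq0$ by hypothesis, the restriction $H^{s+\beta}(\R)\to H^{s+\beta}(I)$ is norm-nonincreasing, giving $\norm{D^{-\beta}u}{H^{s+\beta}(I)}\lesssim\norm{u}{\wilde H^s(I)}$.

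The hard part will be making the high-frequency asymptotics $|m(\xi)|\lesssim(1+|\xi|)^{-\beta}$ rigorous, uniformly for all $\beta>0$. Note that the truncation is not a mere convenience: it is precisely what removes the non-integrable tail of $k_\beta$ responsible for the spurious $|\xi|^{-\beta}$ blow-up of the untruncated symbol at $\xi=0$ (which would be fatal for $\beta\geq1/2$), so the whole difficulty is concentrated in the decay at infinity. I would extract that decay by splitting $\wilde k_\beta=\rho\,k_\beta+(\chi-\rho)k_\beta$ with $\rho$ a cutoff equal to $1$ near $0$: the second summand is smooth with compact support, hence its transform is rapidly decreasing, while the first differs from $x_+^{\beta-1}/\Gamma(\beta)$ only by a smooth factor and thus inherits the $|\xi|^{-\beta}$ decay of the homogeneous model kernel.
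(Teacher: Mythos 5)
Your argument is correct in outline but follows a genuinely different route from the paper. The paper disposes of the range $s\ge 0$ by citing Theorem~3.1 of~\cite{JinLPR_Mcom_15}, obtains the endpoint $s=-\beta$ by duality --- the adjoint identity $\vdual{D^{-\beta}u}{v}=\vdual{u}{D_1^{-\beta}v}$ together with the known boundedness of $D_1^{-\beta}:L_2(I)\to H^{\beta}(I)$ immediately yields $D^{-\beta}:\wilde H^{-\beta}(I)\to L_2(I)$ --- and then interpolates to cover $-\beta<s<0$. Your truncated-kernel multiplier argument instead treats all $s\ge-\beta$ in one stroke, is self-contained, and makes the mapping property transparent as a statement about symbol decay; the price is that you must actually prove the decay $\abs{\Ff(\chi k_\beta)(\xi)}\lesssim(1+\abs{\xi})^{-\beta}$, which is exactly the analytic content the paper outsources to the citation and to duality. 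The reductions you do carry out are sound: the truncation is legitimate because $\mathrm{supp}\,\wilde u\subseteq[0,1]$ confines the relevant kernel values to $(-1,1)$; the restriction $H^{s+\beta}(\R)\to H^{s+\beta}(I)$ is norm-nonincreasing since $s+\beta\ge0$; and agreement with the classical integral on smooth functions identifies the multiplier operator as an extension. (You do implicitly use the identification of $\wilde H^{s}(I)$, $s<0$, defined here as a dual space, with the compactly supported distributions in $H^{s}(\R)$; this is standard for an interval but is a step the paper's duality argument never needs.)

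The one place where real work remains is the high-frequency bound, which you yourself flag. For $0<\beta<1$ --- the only range the paper actually uses, since $\beta=2-\alpha$ --- your sketch closes readily: either read $\Ff(\rho k_\beta)$ as $c\,\Ff\rho*(i\xi)^{-\beta}$ with $(i\xi)^{-\beta}$ locally integrable, or split $\int_0^\infty\rho(x)x^{\beta-1}e^{-ix\xi}\,dx$ at $x=1/\abs{\xi}$ and integrate by parts once on the far piece. For $\beta\ge1$, however, ``differs only by a smooth factor, hence inherits the decay'' is not yet an argument: $(i\xi)^{-\beta}$ is then not locally integrable near $\xi=0$, so the convolution-of-symbols reading breaks down without regularization. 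Either reduce to small order via the semigroup property $D^{-\beta}=D^{-\beta_1}D^{-\beta_2}$, or use $Dk_\beta=k_{\beta-1}$ to gain one power of $\abs{\xi}$ per integration by parts and induct on $\lceil\beta\rceil$. With that supplied, your proof is complete and in fact slightly stronger than the paper's statement.
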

\begin{proof}
  For $0 \leq s$, the statement was shown for $_0D^{-\beta}$ and $D_1^{-\beta}$
  in Theorem~3.1 of~\cite{JinLPR_Mcom_15}.
  It therefore remains to consider $-\beta\leq s<0$. We will show the statement for
  $s=-\beta$, the remaining cases follow by interpolation.
  We already know that $D_1^{-\beta}:L_2(I)\rightarrow H^{\beta}(I)$ is a linear and bounded
  operator.
  According to~\cite[Corollary of Thm.~3.5]{SamkoKM_Fractional}, it holds that
  \begin{align}\label{lemma:extension:eq1}
    \vdual{D^{-\beta}u}{v} = \vdual{u}{{D_1^{-\beta}v}}
    \quad \text{ for all } u,v\in L_2(I).
  \end{align}
  Hence, the right-hand side of~\eqref{lemma:extension:eq1} extends
  $D^{-\beta}$ to a linear, bounded operator $D^{-\beta}:\wilde H^{-\beta}\rightarrow L_2(I)$.
\end{proof}
\begin{lemma}\label{lemma:rli:elliptic}
  The operator $D^{-\beta}$ is elliptic on $H^{-\beta/2}(\TT)$ for
  $0<\beta<1$.
\end{lemma}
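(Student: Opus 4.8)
The plan is to establish the coercivity estimate $\vdual{D^{-\beta}v}{v}\gtrsim\norm{v}{H^{-\beta/2}(\TT)}^2$ (uniformly in $\TT$, for $v$ in a dense subspace such as $L_2(I)$) by first proving it with the \emph{global} interval norm $\norm{v}{H^{-\beta/2}(I)}$ via the Fourier transform on $\R$, and then descending to the broken norm. The positivity of the coercivity constant will come out as $\cos(\beta\pi/2)$, which is positive precisely when $0<\beta<1$; this is where the hypothesis enters.

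First I would pass to the whole line. For real $v\in L_2(I)$, extend by zero to $\tilde v\in L_2(\R)$ with support in $[0,1]$. Since the Riemann--Liouville kernel $k_\beta(x)=x^{\beta-1}/\Gamma(\beta)$ (for $x>0$, zero otherwise) is causal, $D^{-\beta}v$ on $I$ is the restriction of the convolution $k_\beta\ast\tilde v$, so $\vdual{D^{-\beta}v}{v}=\vdual{k_\beta\ast\tilde v}{\tilde v}$ on $\R$. The symbol of $k_\beta$ is $(i\xi)^{-\beta}=|\xi|^{-\beta}e^{-i\beta\pi\,\mathrm{sgn}(\xi)/2}$, so Parseval gives $\vdual{D^{-\beta}v}{v}=c\int_\R(i\xi)^{-\beta}|\Ff\tilde v(\xi)|^2\,d\xi$ for a fixed constant $c>0$. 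Because $v$ is real, $|\Ff\tilde v|^2$ is even and the left-hand side is real; the odd imaginary part of the symbol cancels and taking real parts yields $\vdual{D^{-\beta}v}{v}=c\cos(\beta\pi/2)\int_\R|\xi|^{-\beta}|\Ff\tilde v(\xi)|^2\,d\xi$ with $\cos(\beta\pi/2)>0$ iff $0<\beta<1$. (Equivalently, using the adjoint identity~\eqref{lemma:extension:eq1} one may replace $D^{-\beta}$ by its symmetric part, whose symbol $\mathrm{Re}\,(i\xi)^{-\beta}$ is already real.)

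Next I would recover the global interval norm. Since $(1+\xi^2)^{-\beta/2}\leq|\xi|^{-\beta}$ for all $\xi$, the integral above dominates $\norm{\tilde v}{H^{-\beta/2}(\R)}^2$. As $\beta/2<1/2$, one has $\wilde H^{-\beta/2}(I)=H^{-\beta/2}(I)$ with equivalent norms and zero-extension is bounded below, so $\norm{\tilde v}{H^{-\beta/2}(\R)}\simeq\norm{v}{H^{-\beta/2}(I)}$; boundedness of $D^{-\beta}$ from Lemma~\ref{lemma:extension} guarantees the pairing is well defined on this space. This gives $\vdual{D^{-\beta}v}{v}\gtrsim\norm{v}{H^{-\beta/2}(I)}^2$. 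It then remains to prove the uniform comparison $\norm{v}{H^{-\beta/2}(I)}\gtrsim\norm{v}{H^{-\beta/2}(\TT)}$. Setting $s:=\beta/2\in(0,1/2)$, I choose for each $\el\in\TT$ a near-optimal $\psi_\el\in\wilde H^s(\el)$ realizing $\norm{v}{H^{-s}(\el)}$ and form $\phi:=\sum_\el\psi_\el$ (disjoint supports). The localization estimate underlying~\eqref{eq:broken}, namely \cite[Lemma~20]{FeischlFHKP_ABEM_ARCME_14}, yields $\norm{\phi}{\wilde H^s(I)}^2\lesssim\sum_\el\norm{\psi_\el}{\wilde H^s(\el)}^2=\norm{v}{H^{-s}(\TT)}^2$, while $\dual{v}{\phi}=\norm{v}{H^{-s}(\TT)}^2$; testing the global dual norm with $\phi$ gives the comparison. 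Chaining the three estimates and extending from $L_2(I)$ by density completes the argument.

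I expect the conceptual heart to be the Fourier computation, but the two genuinely delicate points are: justifying the symbol identity for the one-sided operator on the zero-extended function (tracking the branch of $(i\xi)^{-\beta}$ and the cancellation of the imaginary part by evenness of $|\Ff\tilde v|^2$), and the \emph{uniform} global-to-broken comparison, which relies essentially on the localization of the $\wilde H^s$ norm valid only for $s<1/2$. Both the zero-extension equivalence and this localization break down at $s=1/2$, which mirrors exactly the restriction $\beta<1$ already forced by the sign of $\cos(\beta\pi/2)$.
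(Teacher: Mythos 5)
Your proposal is correct and follows essentially the same route as the paper: the Fourier symbol computation isolating the real part $\cos(\beta\pi/2)\,|\xi|^{-\beta}$, the elementary bound $|\xi|^{-\beta}\geq(1+\xi^2)^{-\beta/2}$ to reach the $H^{-\beta/2}(\R)$ norm, and the descent to the broken norm via the localization of $\wilde H^{s}$ for $s<1/2$ (the paper simply cites the dual of~\eqref{eq:broken}, i.e.\ \cite[Lemma~20]{FeischlFHKP_ABEM_ARCME_14}, where you spell out the duality argument explicitly). The only cosmetic difference is that the paper works with $\varphi\in\Dd(I)$ and concludes by density, while you work with $v\in L_2(I)$ directly.
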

\begin{proof}
  For a test function $\varphi\in \Dd(I)$ holds $\Ff(D^{-\beta}\varphi)(\xi)
  = (i\xi)^{-\beta}\Ff(\varphi)(\xi)$, cf.~\cite[Thm.~7.1]{SamkoKM_Fractional}.
  Then, a short computation (cf.~\cite[Proof of Lemma 2.4]{ErvinR_NumPDE_06})
  shows
  \begin{align*}
    \vdual{D^{-\beta}\varphi}{\varphi}
    &= \vdual{(i\xi)^{-\beta}\Ff(\varphi)}{\overline{\Ff(\varphi)}}
    = \vdual{(i\xi)^{-\beta/2}\Ff(\varphi)}{\overline{(-i\xi)^{\beta/2}\Ff(\varphi)}}\\
    &=
    \cos(-\pi\beta/2)\vdual{(i\xi)^{-\beta/2}\Ff(\varphi)}{\overline{(i\xi)^{\beta/2}\Ff(\varphi)}}\\
    &\quad+ i\sin(-\pi\beta/2)
    \Bigl(
    \int_0^\infty (i\xi)^{-\beta/2}\Ff(\varphi)
    \overline{(i\xi)^{-\beta/2}\Ff(\varphi)}d\xi\\
    &\qquad\qquad\qquad\qquad- \int_{-\infty}^0 (i\xi)^{-\beta/2}\Ff(\varphi)
    \overline{(i\xi)^{-\beta/2}\Ff(\varphi)}d\xi
    \Bigr)
  \end{align*}
  As the left-hand side of this identity is real, the imaginary part on the
  right-hand side vanishes. Furthermore, $\cos(-\pi\beta/2)>0$ for
  $0<\beta<1$. We obtain
  \begin{align*}
    \vdual{D^{-\beta}\varphi}{\varphi} \gtrsim
    \norm{(\xi^2)^{-\beta/4}\Ff(\varphi)}{L_2(\R)}^2
    \gtrsim
    \norm{(1+\xi^2)^{-\beta/4}\Ff(\varphi)}{L_2(\R)}^2.
  \end{align*}
  The right-hand side is equivalent to the norm
  $\norm{\varphi}{H^{-\beta/2}(\R)}$. A density argument shows the ellipticity
  on $H^{-\beta/2}(\R)$.
  Since on $H^{-\beta/2}(\TT)$ it holds
  $\norm{\cdot}{H^{-\beta/2}(\R)} \gtrsim \norm{\cdot}{H^{-\beta/2}(I)} \gtrsim
  \norm{\cdot}{H^{-\beta/2}(\TT)}$, cf.~\eqref{eq:broken},
  the proof is finished.
\end{proof}
\subsection{Ultra-weak formulation and main result}\label{section:uw}
We write~\eqref{eq:model} as first-order system
\begin{align}\label{eq:fos}
  \begin{split}
    \sigma - Du &= 0,\\
    -D D^{\alpha-2} \sigma +b Du + cu &= f.
  \end{split}
\end{align}
Then, we multiply these equations with $\tau$ respectively $v$, integrate by parts piecewise on a
partition $\TT$ and rename the appearing boundary terms of $D^{\alpha-2}\sigma$ and $u$
by $\wat \sigma$ and $\wat u$ to obtain
\begin{subequations}\label{eq:bf}
\begin{align}   
  \vdual{\sigma}{\tau} + \vdual{u}{D_\TT\tau} - \dual{\wat u}{\jump{\tau}} &= 0\label{eq:bf:a}\\
  \vdual{D^{\alpha-2}\sigma}{D_\TT v} + \vdual{b\sigma}{v} + \vdual{cu}{v} - \dual{\wat\sigma}{\jump{v}} &= \vdual{f}{v}\label{eq:bf:b}.
\end{align}
\end{subequations}
The left and right-hand sides of the preceding equations define our bilinear form
and linear form via
\begin{align*}
  b(\uu,\vv) := b(\sigma,u,\wat\sigma,\wat u;\tau,v) &:= 
  \vdual{\sigma}{\tau + D^{(\alpha-2)\star}D_\TT v + bv} + \vdual{u}{D_\TT\tau + cv}
  - \dual{\wat u}{\jump{\tau}}
  - \dual{\wat\sigma}{\jump{v}},\\
  \ell(\vv) := \ell(\tau,v) &:= \vdual{f}{v}.
\end{align*}
Here and from now on, $D^{(\alpha-2)\star}:\wilde H^{\alpha/2-1}(I)\rightarrow H^{1-\alpha/2}(I)$ 
denotes the conjugate of $D^{\alpha-2}$.
Define $U_\alpha:=\wilde H^{\alpha/2-1}(I)\times L_2(I)\times\R^{N+1}\times\R^{N-1}$ and
$V_\alpha:=H^1(\TT)\times H^{\alpha/2}(\TT)$, where $N$ is the number of elements of $\TT$,
with product norms
\begin{align*}
  \norm{\uu}{U_\alpha}^2 &:= \norm{\sigma}{\wilde H^{\alpha/2-1}(I)}^2 + \norm{u}{L_2(I)}^2
  + N^{-3}( \abs{\wat\sigma}^2 + \abs{\wat u}^2 ), \text{ and }\\
  \norm{\vv}{V_\alpha}^2 &:= \norm{\tau}{H^1(\TT)}^2 + \norm{v}{H^{\alpha/2}(\TT)}^2.
\end{align*}
By $\abs{\cdot}$, we mean the usual Euclidean norm. Our ultra-weak formulation now reads as follows:
given $\ell\in V_\alpha'$, we aim to find $\uu\in U_\alpha$ such that
\begin{align}\label{eq:equation}
  b(\uu,\vv) = \ell(\vv) \quad\text{ for all } \vv\in V_\alpha.
\end{align}
For a discrete subspace $U_\hp\subset U_\alpha$, the DPG method with
optimal test functions is to find $\uu_\hp\in U_\hp$ such that
\begin{align}\label{eq:equation:discrete}
  b(\uu_\hp,\vv_\hp) = \ell(\vv_\hp) \quad\text{ for all } \vv_\hp\in \TtoT_\alpha(U_\hp),
\end{align}
where $\TtoT_\alpha:U_\alpha\rightarrow V_\alpha$ is the trial-to-test operator
associated with $b$, cf.~\eqref{eq:ttot}.
The following theorem is the main result of this work. It states unique
solvability and stability of the continuous and discrete
formulations~\eqref{eq:equation} and~\eqref{eq:equation:discrete},
as well as a best approximation result.
\begin{theorem}\label{thm:main}
  For $\alpha\in(1,2)$, $f\in L_2(I)$, and arbitrary partition $\TT$,
  the variational formulation~\eqref{eq:equation} has a unique solution $\uu\in
  U_\alpha$, and
  \begin{align*}
    \norm{\uu}{U_\alpha} \lesssim \norm{f}{L_2(I)}.
  \end{align*}
  Furthermore, the discrete problem~\eqref{eq:equation:discrete} has a unique
  solution $\uu_\hp\in U_\hp$, and
  \begin{align*}
    \norm{\uu-\uu_\hp}{U_\alpha}
    \lesssim
    \inf_{(\sigma_\hp',u_\hp',\wat\sigma_\hp',\wat u_\hp') \in U_\hp}
    \left( 
    N^{1-\alpha/2}\norm{\sigma - \sigma_\hp'}{\wilde H^{\alpha/2-1}(I)}
    + \norm{u - u_\hp'}{L_2(I)}
    \right).
  \end{align*}
\end{theorem}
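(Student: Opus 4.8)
The plan is to verify the three Babu\v{s}ka--Brezzi conditions \eqref{eq:bb:1}--\eqref{eq:bb:3} for the bilinear form $b$ on $U_\alpha\times V_\alpha$ and then invoke Theorem~\ref{thm:dpg}, which converts them into existence, uniqueness, the energy-norm stability \eqref{thm:dpg:eq:stab}, and the best-approximation identity \eqref{thm:dpg:eq:approx}. The stability bound $\norm{\uu}{U_\alpha}\lesssim\norm{f}{L_2(I)}$ then follows at once: since $\ell(\vv)=\vdual{f}{v}$ and $\norm{v}{L_2(I)}\le\norm{v}{H^{\alpha/2}(\TT)}\le\norm{\vv}{V_\alpha}$, we get $\norm{\ell}{V_\alpha'}\le\norm{f}{L_2(I)}$, and Theorem~\ref{thm:dpg} yields $\c{infsup}\norm{\uu}{U_\alpha}\le\norm{\uu}{E}\le\norm{\ell}{V_\alpha'}$.

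For the continuity \eqref{eq:bb:3} I would estimate the terms of $b$ separately. The pairings against $\sigma$ use the duality between $\wilde H^{\alpha/2-1}(I)$ and $H^{1-\alpha/2}(I)$ together with boundedness of $D^{(\alpha-2)\star}$ (Lemma~\ref{lemma:extension}) and the multiplier $b\in C^1$; the scaling estimates \eqref{lemma:scaling:1} and \eqref{lemma:scaling:2} convert the broken norms of $\tau$ and of $D_\TT v$ into the norms $\norm{\tau}{H^1(\TT)}$ and $\norm{v}{H^{\alpha/2}(\TT)}$ and are responsible for the factor $N^{1-\alpha/2}$. The pairings against $u$ are controlled directly in $L_2(I)$, while the two jump terms are handled by \eqref{lemma:scaling:3}, whose factor $N^{1/2}$ is matched by the weight $N^{-3}$ in $\norm{\cdot}{U_\alpha}$. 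This establishes \eqref{eq:bb:3}; I would track the $N$-dependence explicitly, since only the $\sigma$-pairings carry $N^{1-\alpha/2}$ and this is exactly what appears in the final estimate.

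The heart of the argument is the inf-sup condition \eqref{eq:bb:2}, which must hold with a constant $\c{infsup}$ independent of $\TT$: given $\vv=(\tau,v)$ I must construct $\uu\in U_\alpha$ with $b(\uu,\vv)\gtrsim\norm{\vv}{V_\alpha}^2$ and $\norm{\uu}{U_\alpha}\lesssim\norm{\vv}{V_\alpha}$. I would assemble $\uu$ from several pieces. Choosing $u$ proportional to $D_\TT\tau$ and $\wat u$ proportional to $-\jump{\tau}$ and invoking the broken Poincar\'e inequality of Lemma~\ref{lemma:poincare:pw} recovers $\norm{\tau}{H^1(\TT)}^2$, the weight $N^{-3}$ being precisely the one balancing the $N^{1/2}$ of \eqref{lemma:scaling:3}. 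For the $H^{\alpha/2}(\TT)$-part I would exploit the identity $\vdual{\sigma}{D^{(\alpha-2)\star}D_\TT v}=\vdual{D^{\alpha-2}\sigma}{D_\TT v}$ and the ellipticity of $D^{\alpha-2}=D^{-(2-\alpha)}$ on $H^{\alpha/2-1}(\TT)$ from Lemma~\ref{lemma:rli:elliptic}, together with the fractional Poincar\'e estimate of Lemma~\ref{lemma:poincare} applied elementwise, so as to bound $\snorm{v}{H^{\alpha/2}(\TT)}^2$ from below; a further Poincar\'e step captures the remaining $\norm{v}{L_2(I)}$. The main obstacle is to carry out this construction so that both $b(\uu,\vv)\gtrsim\norm{\vv}{V_\alpha}^2$ and $\norm{\uu}{U_\alpha}\lesssim\norm{\vv}{V_\alpha}$ hold with $\TT$-independent constants simultaneously, since this requires reconciling the nonlocal operator $D^{\alpha-2}$ with the piecewise structure of $V_\alpha$ and cancelling all the mesh-dependent scalings against each other. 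The injectivity \eqref{eq:bb:1} I would obtain separately: assuming $b(\uu,\cdot)=0$, testing with smooth $v,\tau$ supported in a single element recovers the first-order system \eqref{eq:fos} elementwise, and testing with functions carrying jumps forces $\wat\sigma,\wat u$ to vanish, whence $\uu=0$.

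Finally, for quasi-optimality I would start from \eqref{thm:dpg:eq:approx} and the lower bound $\c{infsup}\norm{\uu-\uu_\hp}{U_\alpha}\le\norm{\uu-\uu_\hp}{E}$, and bound $\inf_{\uu_\hp'}\norm{\uu-\uu_\hp'}{E}$ from above using the continuity analysis term by term. The decisive observation is that the trace components $\wat\sigma\in\R^{N+1}$ and $\wat u\in\R^{N-1}$ are already finite-dimensional, so the discrete space contains the exact traces; choosing $\wat\sigma_\hp'=\wat\sigma$ and $\wat u_\hp'=\wat u$ makes the two jump terms in $b(\uu-\uu_\hp',\vv)$ vanish and removes the trace contributions entirely. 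What remains are exactly the $\sigma$- and $u$-pairings, bounded by $\bigl(N^{1-\alpha/2}\norm{\sigma-\sigma_\hp'}{\wilde H^{\alpha/2-1}(I)}+\norm{u-u_\hp'}{L_2(I)}\bigr)\norm{\vv}{V_\alpha}$. Taking the infimum over $U_\hp$ and dividing by $\c{infsup}$ gives the stated convergence estimate.
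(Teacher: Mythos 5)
Your overall skeleton matches the paper: verify \eqref{eq:bb:1}--\eqref{eq:bb:3}, invoke Theorem~\ref{thm:dpg}, get the stability bound from $\norm{\ell}{V_\alpha'}\le\norm{f}{L_2(I)}$, and obtain the final rate from the refined continuity estimate after dropping the trace components (your observation that $\wat\sigma_\hp'=\wat\sigma$, $\wat u_\hp'=\wat u$ can be chosen exactly is precisely the paper's closing remark). The injectivity and continuity steps are the paper's Lemmas~\ref{lemma:b:injective} and~\ref{lemma:b:cont} and your sketches of them are sound.

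The genuine gap is in the inf-sup condition \eqref{eq:bb:2}, which you correctly identify as the heart of the matter but do not actually prove. The paper does \emph{not} construct a trial function $\uu$ for a given $\vv$. Instead it evaluates the supremum exactly by duality: since $b(\uu,\vv)=\vdual{\sigma}{G}+\vdual{u}{F}-\dual{\wat u}{\jump{\tau}}-\dual{\wat\sigma}{\jump{v}}$ with $G=\tau+D^{(\alpha-2)\star}D_\TT v+bv$ and $F=D_\TT\tau+cv$, the quantity $\sup_{\uu}b(\uu,\vv)/\norm{\uu}{U_\alpha}$ \emph{equals} $\bigl(\norm{G}{H^{1-\alpha/2}(I)}^2+\norm{F}{L_2(I)}^2+N^3(\abs{\jump{\tau}}+\abs{\jump{v}})^2\bigr)^{1/2}$, and the task reduces to bounding $\norm{\vv}{V_\alpha}$ by this expression. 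That bound is obtained by splitting $\vv=\vv_0+\vv_1$, where $\vv_1$ solves the inhomogeneous adjoint system with data $(F,G)$ (Lemma~\ref{lemma:adjoint:inhom}, reduced to the elliptic variational problem of~\cite{ErvinR_NumPDE_06}) and $\vv_0$ then solves the homogeneous broken adjoint system and is controlled purely by its jumps, with the mesh-explicit factor $N^{3/2}$ (Lemma~\ref{lemma:adjoint:hom}). This two-lemma adjoint analysis is the missing idea in your proposal.

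Your proposed direct construction would not close as sketched. Choosing $u\propto D_\TT\tau$ and pairing $\sigma$ with $D^{(\alpha-2)\star}D_\TT v$ generates cross terms $\vdual{D_\TT\tau}{cv}$, $\vdual{\sigma}{\tau}$, $\vdual{\sigma}{bv}$ that have no sign and cannot be absorbed by the principal parts alone (the lower-order terms $b\,Du+cu$ are exactly why the paper routes everything through the well-posed adjoint problem rather than through termwise coercivity). More seriously, to exploit the ellipticity of $D^{\alpha-2}$ you would have to take $\sigma$ essentially equal to $D_\TT v$, but by \eqref{lemma:scaling:2} one only has $\norm{D_\TT v}{\wilde H^{\alpha/2-1}(I)}\lesssim N^{1-\alpha/2}\snorm{v}{H^{\alpha/2}(\TT)}$, so the resulting $\uu$ satisfies only $\norm{\uu}{U_\alpha}\lesssim N^{1-\alpha/2}\norm{\vv}{V_\alpha}$. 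This makes $\c{infsup}$ degenerate like $N^{\alpha/2-1}$, which destroys the mesh-independence needed for the stated stability and quasi-optimality constants. You flag this reconciliation of the nonlocal operator with the broken test space as ``the main obstacle,'' but resolving it is the actual content of the proof; as written, the argument for \eqref{eq:bb:2} is incomplete.
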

\begin{proof}
  We are going to apply Theorem~\ref{thm:dpg}, hence we
  check~\eqref{eq:bb:1}--\eqref{eq:bb:3}. The condition~\eqref{eq:bb:1}
  follows from Lemma~\ref{lemma:b:injective}. The condition~\eqref{eq:bb:3}
  follows from Lemma~\ref{lemma:b:cont}. It remains to check
  condition~\eqref{eq:bb:2}. To that end, observe first that
  \begin{align}\label{thm:main:eq:1}
    \sup_{\uu\in U_\alpha} \frac{b(\uu,\vv)}{\norm{\uu}{U_\alpha}}
    = \left( 
    \norm{\tau+D^{(\alpha-2)\star}D_\TT v+bv}{H^{1-\alpha/2}(I)}^2 + 
    \norm{D_\TT\tau + cv}{L_2(I)}^2
    + N^{3}(\abs{\jump{\tau}} + \abs{\jump{v}})^2
    \right)^{1/2}
  \end{align}
  For given $\vv = (\tau,v)\in V_\alpha$ we define
  $\tau_1\in H^1(I)$ and $v_1\in \wilde H^{\alpha/2}(I)$ as the solution
  of Lemma~\ref{lemma:adjoint:inhom} with data $F:=D_\TT \tau + cv$ and
  $G = \tau + D^{(\alpha-2)\star}D_\TT v + bv$, and write
  $\tau = \tau_0 + \tau_1$ and $v = v_0 + v_1$. The functions $\tau_0$ and
  $v_0$ then fulfill the assumptions of Lemma~\ref{lemma:adjoint:hom}. The
  triangle inequality and Lemmas~\ref{lemma:adjoint:inhom} and~\ref{lemma:adjoint:hom}
  show
  \begin{align}\label{thm:main:eq:2}
    \norm{\vv}{V_\alpha}
    \lesssim 
    \norm{\tau+D^{(\alpha-2)\star}D_\TT v+bv}{H^{1-\alpha/2}(I)} + 
    \norm{D_\TT\tau + cv}{L_2(I)}
    + N^{3/2}(\abs{\jump{\tau}} + \abs{\jump{v}}).
  \end{align}
  The equations~\eqref{thm:main:eq:1} and~\eqref{thm:main:eq:2} show
  condition~\eqref{eq:bb:2}. Theorem~\ref{thm:dpg} shows
  that there are unique solutions $\uu$ and $\uu_\hp$
  of the problems~\eqref{eq:equation} and~\eqref{eq:equation:discrete}
  which fulfill stability~\eqref{thm:dpg:eq:stab} and
  best approximation~\eqref{thm:dpg:eq:approx}, and that
  \begin{align*}
    \c{infsup}\norm{\uu}{U_\alpha} \leq \norm{\uu}{E_\alpha}
    := \sup_{\vv\in V_\alpha}\frac{b(\uu,\vv)}{\norm{\vv}{V_\alpha}}.
  \end{align*}
  Lemma~\ref{lemma:b:cont} shows that
  \begin{align*}
    \inf_{\uu_\hp'\in U_\hp}&\norm{\uu-\uu_\hp'}{E_\alpha} \lesssim\\
    &\inf_{(\sigma_\hp',u_\hp',\wat\sigma_\hp',\wat u_\hp') \in U_\hp}
    \left( 
    N^{1-\alpha/2}\norm{\sigma-\sigma_\hp'}{\wilde H^{\alpha/2-1}(I)}
    + \norm{u-u_\hp'}{L_2(I)}
    \right).
  \end{align*}
  Here, owing to the fact that $\wat u$ and $\wat\sigma$ are just
  finite-dimensional vectors, the functions $\wat\sigma_\hp'$
  and $\wat u_\hp'$ can be omitted on the right-hand side.
\end{proof}
\section{Technical results}\label{section:technicalresults}
The first lemma states boundedness of the bilinear form $b$.
\begin{lemma}\label{lemma:b:cont}
  For $\alpha\in(1,2)$,
  \begin{align*}
    \abs{b(\uu,\vv)}
    &\lesssim \left( N^{2-\alpha}\norm{\sigma}{\wilde
      H^{\alpha/2-1}(I)}^2 + \norm{u}{L_2(I)}^2 + N \abs{\hat u}^2 + N
      \abs{\hat \sigma}^2 \right)^{1/2} \norm{\vv}{V_\alpha},
  \end{align*}
  with a constant independent of $\TT$.
  In particular, $\abs{b(\uu,\vv)} \leq \c{stab}\norm{\uu}{U_\alpha}\norm{\vv}{V_\alpha}$,
  where the constant $\c{stab}$ depends on $N$.
\end{lemma}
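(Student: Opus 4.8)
The plan is to estimate the four contributions to $b(\uu,\vv)$ separately against $\norm{\vv}{V_\alpha}$ and then to assemble them by a discrete Cauchy--Schwarz inequality. Writing
\[
  b(\uu,\vv)=\vdual{\sigma}{\tau+D^{(\alpha-2)\star}D_\TT v+bv}
  +\vdual{u}{D_\TT\tau+cv}
  -\dual{\wat u}{\jump{\tau}}-\dual{\wat\sigma}{\jump{v}},
\]
I would bound each summand by a power of $N$ times the corresponding component of $\uu$ times $\norm{\vv}{V_\alpha}$; the four powers are $N^{1-\alpha/2}$, $1$, $N^{1/2}$, $N^{1/2}$, whose squares $N^{2-\alpha}$, $1$, $N$, $N$ are exactly the weights appearing inside the root.

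For the first (and hardest) summand I use that, by the definition of the tilde spaces, $\sigma\in\wilde H^{\alpha/2-1}(I)$ is dual to $H^{1-\alpha/2}(I)$, so that $\abs{\vdual{\sigma}{w}}\le\norm{\sigma}{\wilde H^{\alpha/2-1}(I)}\norm{w}{H^{1-\alpha/2}(I)}$ with $w=\tau+D^{(\alpha-2)\star}D_\TT v+bv$. I then split $\norm{w}{H^{1-\alpha/2}(I)}$ into three pieces. For $\norm{\tau}{H^{1-\alpha/2}(I)}$ I note that, since $1-\alpha/2<1/2$, a piecewise $H^1$ function indeed lies in $H^{1-\alpha/2}(I)$, and apply eq.~\eqref{lemma:scaling:1} with $s=1-\alpha/2$ together with $\norm{\tau}{H^{1-\alpha/2}(\TT)}\le\norm{\tau}{H^1(\TT)}$. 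For $\norm{bv}{H^{1-\alpha/2}(I)}$ I use that $b\in C^1$ acts as a multiplier and again eq.~\eqref{lemma:scaling:1}, legitimate because $1-\alpha/2<\alpha/2$. For $\norm{D^{(\alpha-2)\star}D_\TT v}{H^{1-\alpha/2}(I)}$ I invoke the stated boundedness $D^{(\alpha-2)\star}:\wilde H^{\alpha/2-1}(I)\to H^{1-\alpha/2}(I)$ and then eq.~\eqref{lemma:scaling:2} with $s=\alpha/2\in(1/2,1)$, giving $\norm{D_\TT v}{\wilde H^{\alpha/2-1}(I)}\lesssim N^{1-\alpha/2}\snorm{v}{H^{\alpha/2}(\TT)}$. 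Each piece carries the factor $N^{1-\alpha/2}$, whence $\norm{w}{H^{1-\alpha/2}(I)}\lesssim N^{1-\alpha/2}\norm{\vv}{V_\alpha}$.

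The remaining three summands are routine. For $\vdual{u}{D_\TT\tau+cv}$ I apply the $L_2$ Cauchy--Schwarz inequality and $\norm{D_\TT\tau+cv}{L_2(I)}\le\norm{\tau}{H^1(\TT)}+\norm{c}{L^\infty(I)}\norm{v}{L_2(I)}\lesssim\norm{\vv}{V_\alpha}$. For the two jump terms I use the Euclidean Cauchy--Schwarz inequality and eq.~\eqref{lemma:scaling:3} (with $s=1$ for $\tau$ and $s=\alpha/2$ for $v$) to obtain $\abs{\jump{\tau}},\abs{\jump{v}}\lesssim N^{1/2}\norm{\vv}{V_\alpha}$. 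Collecting the four bounds and using $\sum a_i\le 2(\sum a_i^2)^{1/2}$ yields the displayed inequality. The ``in particular'' assertion then follows by comparing the weights $N^{2-\alpha},1,N,N$ with the weights $1,1,N^{-3},N^{-3}$ in $\norm{\uu}{U_\alpha}^2$; the jump terms dominate and force $\c{stab}^2\simeq N^4$, which is exactly why $\c{stab}$ depends on $N$.

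The main obstacle is the first summand: controlling $D^{(\alpha-2)\star}D_\TT v$ in $H^{1-\alpha/2}(I)$ requires chaining the mapping property of the \emph{extended} fractional operator (Lemma~\ref{lemma:extension}, read through its conjugate) with the negative-order, inverse-type scaling estimate eq.~\eqref{lemma:scaling:2}, and one must also justify that the globally coupled quantities $\tau,v,bv$ genuinely belong to $H^{1-\alpha/2}(I)$ despite being only piecewise smooth -- which is precisely where $1-\alpha/2<1/2$ enters. Everything else reduces to duality and the scaling lemma.
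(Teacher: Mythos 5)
Your proposal is correct and follows essentially the same route as the paper: duality of $\wilde H^{\alpha/2-1}(I)$ with $H^{1-\alpha/2}(I)$, the triangle inequality, the three scaling estimates of Lemma~\ref{lemma:scaling}, the mapping property of $D^{(\alpha-2)\star}$ from Lemma~\ref{lemma:extension}, and a final Cauchy--Schwarz step, with the condition $1\leq\alpha$ entering exactly where you place it. The only difference is that you spell out the intermediate bounds that the paper compresses into one display, and your accounting of the weights $N^{2-\alpha},1,N,N$ against $1,1,N^{-3},N^{-3}$ correctly explains the $N$-dependence of $\c{stab}$.
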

\begin{proof}
  By Lemma~\ref{lemma:scaling}, eq.~\eqref{lemma:scaling:3}, we have
  \begin{align*}
    \abs{\dual{\wat u}{\jump{\tau}}} \lesssim N^{1/2}\abs{\wat u}
    \norm{\tau}{H^1(\TT)}\quad\text{ and } \quad
    \abs{\dual{\wat \sigma}{\jump{v}}} \lesssim N^{1/2}\abs{\wat \sigma}
    \norm{v}{H^{\alpha/2}(\TT)}.
  \end{align*}
  The triangle inequality, Lemmas~\ref{lemma:scaling}
  and~\ref{lemma:extension}, the definition of $D^{(\alpha-2)\star}$,
  $c\in L^\infty([0,1])$, $b\in C^1([0,1])$, and $1\leq\alpha$ show
  \begin{align*}
    \norm{\tau+D^{(\alpha-2)\star}D_\TT v + bv}{H^{1-\alpha/2}(I)}
    &\lesssim \norm{\tau}{H^{1-\alpha/2}(I)}
    + \norm{D_\TT v}{\wilde H^{\alpha/2-1}(I)}
    + \norm{bv}{H^{1-\alpha/2}(I)}\\
    &\lesssim N^{1-\alpha/2}\left( 
    \norm{\tau}{H^1(\TT)} + \norm{v}{H^{\alpha/2}(\TT)}
  \right)
  \end{align*}
  and
  \begin{align*}
    \norm{D_\TT \tau + cv}{L_2(I)} \lesssim \norm{\tau}{H^1(\TT)} + \norm{v}{H^{\alpha/2}(\TT)}.
  \end{align*}
  We finish the proof with the triangle and Cauchy-Schwarz inequalities.
\end{proof}
\begin{lemma}\label{lemma:b:injective}
  It holds that
  \begin{align*}
    b(\uu,\vv) = 0 \text{ for all } \vv\in\VV\Longleftrightarrow \uu=0.
  \end{align*}
\end{lemma}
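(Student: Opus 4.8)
The reverse implication is immediate by linearity of $b$ in $\uu$, so the task is to show that $b(\uu,\vv)=0$ for all $\vv=(\tau,v)\in V_\alpha$ forces $\uu=(\sigma,u,\wat\sigma,\wat u)=0$. The plan is to recover the strong homogeneous first-order system from the vanishing of $b$, to bootstrap the regularity of the field variables until nodal traces exist and elementwise integration by parts is licit, and then to close with an energy identity that combines the ellipticity of $D^{\alpha-2}$ with the sign condition $c-Db/2\ge0$.

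First I would exploit the hypothesis with $v=0$, which reproduces the left-hand side of~\eqref{eq:bf:a}. Taking $\tau\in\Dd(T)$ supported in a single element annihilates the jump term and gives $\vdual{\sigma}{\tau}+\vdual{u}{D\tau}=0$, whence $\sigma=Du$ distributionally on each $T$; since $u\in L_2(T)$ and $Du=\sigma\in H^{\alpha/2-1}(T)$, Lemma~\ref{lemma:poincare} upgrades $u$ to $H^{\alpha/2}(\TT)$, so $u$ has one-sided traces at the nodes (recall $\alpha/2>1/2$). Testing~\eqref{eq:bf:a} again with a general $\tau\in H^1(\TT)$ and integrating by parts elementwise, the volume terms cancel and only nodal contributions survive; matching the coefficients of the freely prescribable one-sided values $\tau(x^\pm)$ (e.g.\ with piecewise linear $\tau$) forces $u$ to be continuous across interior nodes, forces $u(0)=u(1)=0$, and identifies $\wat u$ with the nodal traces of $u$. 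Hence $u\in\wilde H^{\alpha/2}(I)$ with $\sigma=Du$.

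Next I would treat the hypothesis with $\tau=0$, reproducing the left-hand side of~\eqref{eq:bf:b}. Setting $w:=D^{\alpha-2}\sigma\in H^{1-\alpha/2}(I)$ and using the defining adjoint relation $\vdual{\sigma}{D^{(\alpha-2)\star}\phi}=\vdual{D^{\alpha-2}\sigma}{\phi}$, a test with $v\in\Dd(T)$ yields $-Dw+b\sigma+cu=0$ on each $T$. Because $Dw=b\sigma+cu\in H^{\alpha/2-1}(T)$, a second application of Lemma~\ref{lemma:poincare} upgrades $w$ to $H^{\alpha/2}(\TT)$, so $w$ too has nodal traces. Testing~\eqref{eq:bf:b} with a general $v\in H^{\alpha/2}(\TT)$ and integrating by parts elementwise, the reconstructed equation cancels the volume terms and leaves only nodal contributions; the same trace-matching argument shows $w$ is continuous across interior nodes (so $w\in H^{\alpha/2}(I)$ and $-Dw+b\sigma+cu=0$ holds globally in $H^{\alpha/2-1}(I)$) and identifies $\wat\sigma$ with the nodal traces of $w$.

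Finally I would close with the energy identity. Pairing the global equation $-Dw+b\sigma+cu=0$ with $u\in\wilde H^{\alpha/2}(I)$ and integrating by parts (all boundary terms vanish since $u(0)=u(1)=0$) gives
\[
  \vdual{D^{\alpha-2}\sigma}{\sigma}+\int_0^1\bigl(c-\tfrac12 Db\bigr)u^2\,dx=0,
\]
where I use $\vdual{-Dw}{u}=\vdual{w}{\sigma}=\vdual{D^{\alpha-2}\sigma}{\sigma}$ and $\vdual{b\sigma}{u}=-\tfrac12\int_0^1(Db)u^2\,dx$ (here $b\in C^1$ and the embedding $H^{\alpha/2}(I)\hookrightarrow C^0$ are invoked). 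Both summands are nonnegative: the first by the ellipticity of $D^{\alpha-2}$ on $H^{\alpha/2-1}(\TT)$ from Lemma~\ref{lemma:rli:elliptic} (with $\beta=2-\alpha\in(0,1)$), the second by the standing assumption $c-Db/2\ge0$. Hence both vanish; ellipticity forces $\sigma=0$, and $Du=\sigma=0$ with $u(0)=0$ gives $u=0$. Consequently $w=D^{\alpha-2}\sigma=0$, so the nodal identifications above yield $\wat u=0$ and $\wat\sigma=0$, i.e.\ $\uu=0$. I expect the main obstacle to be the regularity bootstrapping: one must climb from $\sigma\in\wilde H^{\alpha/2-1}(I)$ and $u\in L_2(I)$ up to $u,w\in H^{\alpha/2}$ so that traces exist and every elementwise integration by parts—needed both to read off the transmission and boundary conditions and to run the energy identity—is rigorously justified in these negative-order and fractional dual spaces, while carefully tracking the nonlocal operator $D^{(\alpha-2)\star}$ through the localization step.
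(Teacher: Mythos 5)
Your proof is correct, and its first half coincides with the paper's: test \eqref{eq:bf:a} with interior test functions to get $\sigma=Du$, upgrade $u$ to $H^{\alpha/2}$, then use elementwise test functions to read off continuity, the boundary conditions, and the identification of $\wat u$, so that $u\in\wilde H^{\alpha/2}(I)$. The two arguments diverge afterwards. The paper simply substitutes $\sigma=Du$ into \eqref{eq:bf:b}, restricts to test functions $v\in\wilde H^{\alpha/2}(I)$ (for which the jump term drops out), recognizes the resulting problem as the standard weak formulation of \eqref{eq:model}, and invokes its known ellipticity on $\wilde H^{\alpha/2}(I)$ from \cite[Section~3]{ErvinR_NumPDE_06} to get $u=0$ in one line. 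You instead recover the strong form $-Dw+b\sigma+cu=0$ with $w=D^{\alpha-2}\sigma$, bootstrap $w$ into $H^{\alpha/2}(\TT)$, identify $\wat\sigma$ with the nodal traces of $w$, and then prove the needed coercivity by hand via the energy identity $\vdual{D^{\alpha-2}\sigma}{\sigma}+\int_0^1(c-\tfrac12 Db)\,u^2\,dx=0$, combining Lemma~\ref{lemma:rli:elliptic} with $c-Db/2\ge0$. This buys self-containedness (you lean on the paper's own ellipticity lemma rather than the external reference, which is essentially a re-derivation of the cited coercivity in the special case at hand) at the cost of extra regularity bookkeeping; note in particular that your analysis of $w$ and $\wat\sigma$ is dispensable, since once $\sigma=u=0$ the hypothesis reduces to $\dual{\wat u}{\jump{\tau}}+\dual{\wat\sigma}{\jump{v}}=0$ for all $(\tau,v)$, and the jumps range over all of $\R^{N+1}\times\R^{N-1}$, forcing $\wat u=\wat\sigma=0$ directly. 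Since the paper uses \cite[Section~3]{ErvinR_NumPDE_06} elsewhere anyway (Lemmas~\ref{lemma:adjoint:inhom} and~\ref{lemma:adjoint:hom}), the shorter citation-based route costs nothing, but your version is a valid alternative.
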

\begin{proof}
  The direction $\Leftarrow$ is clear, and we proceed with the implication $\Rightarrow$.
  Using $\tau\in C_0^\infty(I)$ in~\eqref{eq:bf:a} shows that the distributional derivative of $u$ fulfills
  $Du = \sigma$. As $\sigma\in \wilde H^{\alpha/2-1}(I)$, we conclude that $u\in H^{\alpha/2}(I)$.
  In a second step, using functions $\tau\in C^\infty(\el)$ for all $\el\in\TT$ in~\eqref{eq:bf:a} and
  integrating by parts shows that $\wat u = u$ at inner nodes as well as
  $u(a)=u(b)=0$.
  Hence $u\in \wilde H^{\alpha/2}(I)$.
  We plug in $\sigma = Du$ in~\eqref{eq:bf:b} and obtain the variational formulation
  \begin{align*}
    \vdual{D^{\alpha-2}Du}{Dv} + \vdual{bDu}{v} + \vdual{cu}{v} = 0
    \quad\text{ for all } v\in \wilde H^{\alpha/2}(I).
  \end{align*}
  According to~\cite[Section~3]{ErvinR_NumPDE_06}, the bilinear form on the left-hand side
  of this formulation is elliptic on $\wilde H^{\alpha/2}(I)$. We conclude that $u=0$ and
  hence $\sigma=0$. Then, $\wat u = 0$ and $\wat\sigma=0$ follow immediately.
\end{proof}
\subsection{Analysis of the adjoint problem}
\begin{lemma}\label{lemma:adjoint:inhom}
  For $F\in L_2(I)$ and $G\in H^{1-\alpha/2}(I)$, there exists a solution
  $\tau\in H^1(I)$, $v\in \wilde H^{\alpha/2}(I)$ of
  \begin{align}\label{lemma:adjoint:inhom:eq}
    \begin{split}
      D\tau + cv &= F\\
      \tau + D^{(\alpha-2) \star} Dv + bv &= G
    \end{split}
  \end{align}
  such that
  \begin{align}\label{lemma:adjoint:inhom:stab}
    \norm{v}{H^{\alpha/2}(I)} + \norm{\tau}{H^1(I)} \lesssim \norm{F}{L_2(I)} + \norm{G}{H^{1-\alpha/2}(I)}.
  \end{align}
\end{lemma}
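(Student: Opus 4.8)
The plan is to eliminate $\tau$, reduce the system to a single coercive variational problem for $v$ alone, solve that by Lax--Milgram, and finally recover $\tau$ via a regularity bootstrap. Solving the second equation of~\eqref{lemma:adjoint:inhom:eq} for $\tau = G - D^{(\alpha-2)\star}Dv - bv$, inserting this into the first equation tested against $\phi\in\wilde H^{\alpha/2}(I)$, and integrating by parts (using that $\phi$ vanishes at the endpoints) leads to the weak problem: find $v\in\wilde H^{\alpha/2}(I)$ such that
\begin{align*}
  \vdual{D^{(\alpha-2)\star}Dv + bv}{D\phi} + \vdual{cv}{\phi}
  = \vdual{F}{\phi} + \vdual{G}{D\phi}
  \quad\text{ for all } \phi\in\wilde H^{\alpha/2}(I).
\end{align*}
Using the definition of the conjugate $D^{(\alpha-2)\star}$ together with $\vdual{bD\phi}{v}=\vdual{D\phi}{bv}$, the left-hand side is exactly $a(\phi,v)$, where $a(u,w):=\vdual{D^{\alpha-2}Du}{Dw}+\vdual{bDu}{w}+\vdual{cu}{w}$ is the bilinear form shown to be elliptic on $\wilde H^{\alpha/2}(I)$ in the proof of Lemma~\ref{lemma:b:injective}, following \cite[Section~3]{ErvinR_NumPDE_06}.

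Next I would solve this weak problem. Since the transpose form $a^\star(v,\phi):=a(\phi,v)$ inherits both boundedness and coercivity from $a$ (indeed $a^\star(v,v)=a(v,v)\gtrsim\norm{v}{\wilde H^{\alpha/2}(I)}^2$), Lax--Milgram yields a unique $v\in\wilde H^{\alpha/2}(I)$. For the stability bound it remains to estimate the right-hand side functional: the term $\vdual{F}{\phi}$ is controlled by $\norm{F}{L_2(I)}\norm{\phi}{L_2(I)}$, while for $\vdual{G}{D\phi}$ I use the duality between $H^{1-\alpha/2}(I)$ and $\wilde H^{\alpha/2-1}(I)$ together with the boundedness of the derivative $D:\wilde H^{\alpha/2}(I)\to\wilde H^{\alpha/2-1}(I)$. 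This gives $\norm{v}{\wilde H^{\alpha/2}(I)}\lesssim\norm{F}{L_2(I)}+\norm{G}{H^{1-\alpha/2}(I)}$, and since $\norm{v}{H^{\alpha/2}(I)}\le\norm{v}{\wilde H^{\alpha/2}(I)}$ the $H^{\alpha/2}(I)$ norm is controlled as well.

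Finally I would recover $\tau:=G - D^{(\alpha-2)\star}Dv - bv$ and establish its regularity. By Lemma~\ref{lemma:extension} and the mapping properties of multiplication by $b\in C^1([0,1])$, every summand lies in $H^{1-\alpha/2}(I)$, so a priori only $\tau\in H^{1-\alpha/2}(I)\subset L_2(I)$ with $\norm{\tau}{L_2(I)}\lesssim\norm{G}{H^{1-\alpha/2}(I)}+\norm{v}{\wilde H^{\alpha/2}(I)}$. I expect the main obstacle to be upgrading this to $\tau\in H^1(I)$: the algebraic relation alone yields only the reduced order $1-\alpha/2$, and the full order-one derivative is gained solely from the first equation. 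Concretely, I would test the weak formulation with $\phi\in\Dd(I)$, which shows that the distributional derivative satisfies $D\tau = F - cv\in L_2(I)$; as $c\in L^\infty(I)$ and $v\in L_2(I)$ this gives $\norm{D\tau}{L_2(I)}\lesssim\norm{F}{L_2(I)}+\norm{v}{\wilde H^{\alpha/2}(I)}$, hence $\tau\in H^1(I)$ with the first equation of~\eqref{lemma:adjoint:inhom:eq} holding (the second holds by construction). Combining the two bounds yields~\eqref{lemma:adjoint:inhom:stab}.
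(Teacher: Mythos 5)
Your proposal is correct and follows essentially the same route as the paper: eliminate $\tau$, solve the resulting elliptic variational problem for $v\in\wilde H^{\alpha/2}(I)$ via the coercivity from \cite[Section~3]{ErvinR_NumPDE_06}, then define $\tau:=G-D^{(\alpha-2)\star}Dv-bv$ and upgrade its regularity to $H^1(I)$ by reading off $D\tau=F-cv$ from the weak formulation tested with $\phi\in\Dd(I)$. The only cosmetic difference is that you write the right-hand side as $\vdual{G}{D\phi}$ where the paper writes $-\vdual{DG}{\phi}$, which coincide after integration by parts.
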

\begin{proof}
  Consider the variational formulation to find $v\in \wilde H^{\alpha/2}(I)$ such that
  \begin{align*}
    \vdual{Dv}{D^{\alpha-2}D\phi} + \vdual{bv}{D\phi} + \vdual{cv}{\phi} = \vdual{F}{\phi} - \vdual{DG}{\phi}
    \quad\text{ for all }\phi\in\wilde H^{\alpha/2}(I).
  \end{align*}
  According to~\cite[Section.~3]{ErvinR_NumPDE_06}, the bilinear form of this formulation is elliptic
  on $\wilde H^{\alpha/2}(I)$. The linear functional on the right-hand side is bounded
  in $\wilde H^{\alpha/2}(I)$
  with constant $\norm{F}{L_2(I)} + \norm{G}{H^{1-\alpha/2}(I)}$.
  Hence, there exists a unique solution $v\in \wilde H^{\alpha/2}(I)$ which satisfies
  \begin{align*}
    \norm{v}{\wilde H^{\alpha/2}(I)} \lesssim \norm{F}{L_2(I)} + \norm{G}{H^{1-\alpha/2}(I)}
  \end{align*}
  Now define $\tau := - D^{(\alpha-2) \star} Dv - bv + G$. A priori, $\tau\in H^{1-\alpha/2}(I)$,
  but the definition of $v$ shows
  \begin{align*}
    \vdual{\tau}{D\phi} = \vdual{cv-F}{\phi}\quad\text{ for all }\phi\in C_0^\infty(I).
  \end{align*}
  Hence, $\tau\in H^1(I)$ and $D\tau = F-cv$. The bounds on $\tau$ follow immediately.
\end{proof}
\begin{lemma}\label{lemma:adjoint:hom}
  Suppose that $\tau\in H^1(\TT)$ and $v\in H^{\alpha/2}(\TT)$ fulfill
  \begin{subequations}\label{lemma:adjoint:hom:eq}
    \begin{align}
      D_\TT\tau + cv &= 0\label{lemma:adjoint:hom:eq:a}\\
      \tau + D^{(\alpha-2) \star} D_\TT v + bv &= 0,\label{lemma:adjoint:hom:eq:b}
    \end{align}
  \end{subequations}
 on $I$. Then
  \begin{align*}
    \norm{\tau}{H^1(\TT)} + \norm{v}{H^{\alpha/2}(\TT)} \lesssim 
    N^{3/2}\left( \abs{\jump{v}} + \abs{\jump{\tau}} \right).
  \end{align*}
\end{lemma}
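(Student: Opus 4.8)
The plan is to exploit the coercivity structure hidden in the system by testing the fractional equation~\eqref{lemma:adjoint:hom:eq:b} against the broken derivative $D_\TT v$ and using the first equation~\eqref{lemma:adjoint:hom:eq:a} to eliminate $D_\TT\tau$. Since $v\in H^{\alpha/2}(\TT)$ with $\alpha/2\in(1/2,1)$, the object $D_\TT v$ is a genuinely negative-order distribution in $H^{\alpha/2-1}(\TT)$, so every pairing below is understood in the duality sense; the integrations by parts are carried out element-wise, where they are classical because $\tau\in H^1(\el)$ and $bv\in L_2(\el)$ on each $\el\in\TT$. The interface terms produced in this way will carry exactly the jumps $\jump{\tau}$ and $\jump{v}$, and the elliptic part of the fractional operator will control the seminorm $\snorm{v}{H^{\alpha/2}(\TT)}$.

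Concretely, pairing~\eqref{lemma:adjoint:hom:eq:b} with $D_\TT v$ and integrating the $\dual{\tau}{D_\TT v}$ term by parts element-wise, then inserting $D_\TT\tau=-cv$ from~\eqref{lemma:adjoint:hom:eq:a}, replaces $-\vdual{D_\TT\tau}{v}$ by $\vdual{cv}{v}$ and leaves interface contributions of the form $\avg{\tau}\jump{v}+\jump{\tau}\avg{v}$; treating $\dual{bv}{D_\TT v}$ by the product rule yields $-\tfrac12\vdual{(Db)v}{v}$ plus interface terms $b\,\avg{v}\jump{v}$. One arrives at
\[
  \dual{D^{(\alpha-2)\star}D_\TT v}{D_\TT v}+\vdual{(c-\tfrac12 Db)v}{v}=R,
\]
where $R$ gathers the interface and endpoint contributions and is bounded by $\abs{\jump{v}}\,\abs{\avg{\tau}}+\abs{\jump{\tau}}\,\abs{\avg{v}}+\abs{\jump{v}}\,\abs{\avg{v}}$ together with the nodal traces at $0$ and $1$. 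The second term on the left is nonnegative by the standing assumption $c-Db/2\ge0$. Since $D^{(\alpha-2)\star}=D_1^{\alpha-2}$ is the conjugate of $D^{\alpha-2}$ (cf.~\eqref{lemma:extension:eq1}), it carries the same real quadratic form, so Lemma~\ref{lemma:rli:elliptic} on $H^{\alpha/2-1}(\TT)=H^{-(2-\alpha)/2}(\TT)$ gives $\dual{D^{(\alpha-2)\star}D_\TT v}{D_\TT v}\gtrsim\norm{D_\TT v}{H^{\alpha/2-1}(\TT)}^2\gtrsim\snorm{v}{H^{\alpha/2}(\TT)}^2$, the last step by the element-wise Lemma~\ref{lemma:poincare}.

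Bounding averages and traces by broken norms through Lemma~\ref{lemma:scaling}, eq.~\eqref{lemma:scaling:3} (so $\abs{\avg{\tau}}\lesssim N^{1/2}\norm{\tau}{H^1(\TT)}$ and $\abs{\avg{v}}\lesssim N^{1/2}\norm{v}{H^{\alpha/2}(\TT)}$) turns the identity into $\snorm{v}{H^{\alpha/2}(\TT)}^2\lesssim N^{1/2}(\abs{\jump{\tau}}+\abs{\jump{v}})(\norm{\tau}{H^1(\TT)}+\norm{v}{H^{\alpha/2}(\TT)})$. It remains to control $\norm{\tau}{H^1(\TT)}$ and $\norm{v}{L_2(I)}$. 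For $\tau$, eq.~\eqref{lemma:adjoint:hom:eq:a} gives $\norm{D_\TT\tau}{L_2(I)}=\norm{cv}{L_2(I)}\lesssim\norm{v}{L_2(I)}$, and Lemma~\ref{lemma:poincare:pw} yields $\norm{\tau}{H^1(\TT)}\lesssim\norm{v}{L_2(I)}+N^{1/2}\abs{\jump{\tau}}$. For $v$, an element-wise Poincaré estimate combined with a telescoping of the element means anchored at the endpoints (where the jump is the trace) gives a broken Friedrichs inequality $\norm{v}{L_2(I)}\lesssim N^{1-\alpha/2}\snorm{v}{H^{\alpha/2}(\TT)}+N^{1/2}\abs{\jump{v}}$. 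Inserting both bounds, absorbing the cross term by Young's inequality, and collecting powers gives $\norm{\tau}{H^1(\TT)}+\norm{v}{H^{\alpha/2}(\TT)}\lesssim N^{5/2-\alpha}(\abs{\jump{\tau}}+\abs{\jump{v}})\le N^{3/2}(\abs{\jump{\tau}}+\abs{\jump{v}})$, uniformly in $\alpha\in(1,2)$, which is the claim.

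The main obstacle is the rigorous treatment of the fractional term: one must justify testing the $L_2$-identity~\eqref{lemma:adjoint:hom:eq:b} against the negative-order distribution $D_\TT v$, identify the resulting pairing with the elliptic form of Lemma~\ref{lemma:rli:elliptic} on the broken space $H^{\alpha/2-1}(\TT)$, and check that the conjugate operator inherits ellipticity. The rest is bookkeeping, but one delicate point deserves attention: the broken Friedrichs inequality for $v$ introduces the factor $N^{1-\alpha/2}$ in front of the seminorm, and it is precisely the propagation of this factor through the Young absorption that inflates the final exponent to $N^{5/2-\alpha}$, which is bounded by the stated (non-sharp) power $N^{3/2}$; all remaining steps are routine scaling via Lemma~\ref{lemma:scaling}.
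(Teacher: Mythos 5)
Your proof is correct in outline and shares the paper's central step --- the coercivity identity obtained by combining \eqref{lemma:adjoint:hom:eq:a} and \eqref{lemma:adjoint:hom:eq:b}, the ellipticity of the fractional operator on $H^{\alpha/2-1}(\TT)$ from Lemma~\ref{lemma:rli:elliptic}, and Lemma~\ref{lemma:poincare} to pass from $\norm{D_\TT v}{H^{\alpha/2-1}(\TT)}$ to $\snorm{v}{H^{\alpha/2}(\TT)}$ --- but you close the argument by a genuinely different route. The paper controls $\norm{v}{L_2(I)}$ by a duality argument: it solves the adjoint fractional boundary value problem $-DD^{\alpha-2}D\psi+bD\psi+c\psi=-v$, establishes the extra regularity $D^{\alpha-2}D\psi\in H^{\alpha/2}(I)$ (again via Lemma~\ref{lemma:poincare}), and integrates by parts to get $\norm{v}{L_2(I)}^2\lesssim N^{1/2}(\abs{\jump{\tau}}+\abs{\jump{v}})\norm{v}{H^{\alpha/2}(\TT)}$ directly; it then bounds $\norm{\tau}{L_2(I)}$ through a second decomposition $\tau=D\psi+t$ that needs the explicit formula for $D^{\alpha-2}1$. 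You replace both of these by elementary piecewise Poincar\'e-type estimates: a broken Friedrichs inequality $\norm{v}{L_2(I)}\lesssim N^{1-\alpha/2}\snorm{v}{H^{\alpha/2}(\TT)}+N^{1/2}\abs{\jump{v}}$ obtained by telescoping element means (valid on the quasi-uniform meshes the paper assumes; the $N^{1/2}$ comes from Cauchy--Schwarz over up to $N$ jumps and the trace-minus-mean scaling $N^{1/2-\alpha/2}$ per element), and Lemma~\ref{lemma:poincare:pw} combined with $D_\TT\tau=-cv$ for $\tau$. Your bookkeeping checks out: the factor $N^{1-\alpha/2}$ propagates through the Young absorption to give $N^{5/2-\alpha}\le N^{3/2}$, so the claimed estimate follows, in fact with a slightly sharper exponent for $\alpha>1$. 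What the paper's duality argument buys is an $\alpha$-independent exponent obtained without the telescoping construction; what your route buys is the complete avoidance of the auxiliary fractional boundary value problem and of the explicit evaluation of $D^{\alpha-2}1$. To make the argument fully rigorous you should write out the broken Friedrichs inequality in detail, and justify --- as you correctly flag --- that \eqref{lemma:adjoint:hom:eq:b} may be paired with the negative-order object $D_\TT v$ and that the conjugate operator $D^{(\alpha-2)\star}$ inherits the ellipticity of Lemma~\ref{lemma:rli:elliptic} (it does, since its real quadratic form coincides with that of $D^{\alpha-2}$ by \eqref{lemma:extension:eq1}); these last points are treated at the same level of informality in the paper's own proof.
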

\begin{proof}
  We proceed in three steps.\\

  {\bf Step 1:} Let $\psi\in \wilde H^{\alpha/2}(I)$ be the unique variational
  solution of $-DD^{\alpha-2}D\psi+bD\psi +c\psi= -v$,
  cf.~\cite[Section~3]{ErvinR_NumPDE_06}, i.e.,
  \begin{align*}
    \vdual{D^{\alpha-2}D\psi}{D\phi} + \vdual{bD\psi}{\phi} +
    \vdual{c\psi}{\phi} = -\vdual{v}{\phi}\quad
    \text{ for all } \phi\in\wilde H^{\alpha/2}(I),
  \end{align*}
  so that $\norm{\psi}{\wilde H^{\alpha/2}(I)} \lesssim
  \norm{v}{L_2(I)}$. Due to Lemma~\ref{lemma:extension}
  and $\alpha-2\leq \alpha/2-1$, it holds
  \begin{align}\label{lemma:adjoint:hom:eq3}
    \norm{D^{\alpha-2}D\psi}{L_2(I)} &\lesssim \norm{D\psi}{\wilde H^{\alpha-2}(I)}
    \lesssim \norm{D\psi}{\wilde H^{\alpha/2-1}}
    \lesssim \norm{\psi}{\wilde H^{\alpha/2}(I)} \lesssim \norm{v}{L_2(I)}.
  \end{align}
  The equation solved by $\psi$ implies that the distributional derivative of
  $D^{\alpha-2}D\psi$ is given by
  $DD^{\alpha-2}D\psi = bD\psi+c\psi +v\in H^{\alpha/2-1}(I)$, such that
  $D^{\alpha-2}D\psi\in H^{\alpha/2}(I)$. Using Lemma~\ref{lemma:poincare},
  we see
  \begin{align}\label{lemma:adjoint:hom:eq4}
    \begin{split}
      \snorm{D^{\alpha-2}D\psi}{H^{\alpha/2}(I)} &\lesssim
      \norm{DD^{\alpha-2}D\psi}{H^{\alpha/2-1}(I)}
      = \norm{bD\psi + c\psi + v}{H^{\alpha/2-1}(I)}\\
      &\lesssim \norm{\psi}{\wilde H^{\alpha/2}(I)} + \norm{v}{L_2(I)}
      \lesssim \norm{v}{L_2(I)}.
    \end{split}
  \end{align}
  We may also integrate by parts and use~\eqref{lemma:adjoint:hom:eq} to obtain
  
  \begin{align*}
    \vdual{v}{v} &= -\vdual{D^{\alpha-2}D\psi}{D_\TT v} - \vdual{bD\psi}{v} -
    \vdual{c\psi}{v} + \dual{D^{\alpha-2}D\psi}{\jump{v}}\\
    &=\vdual{D\psi}{-D^{(\alpha-2)\star}D_\TT v - bv - \tau} +
    \dual{\psi}{\jump{\tau}} + \dual{D^{\alpha-2}D\psi}{\jump{v}}\\
    &= \dual{\psi}{\jump{\tau}} + \dual{D^{\alpha-2}D\psi}{\jump{v}}.
  \end{align*}
  
  Lemma~\ref{lemma:scaling}, eq.~\eqref{lemma:scaling:3},
  estimates~\eqref{lemma:adjoint:hom:eq3},~\eqref{lemma:adjoint:hom:eq4}, and stability of $\psi$
  yield
  \begin{align}\label{lemma:adjoint:hom:eq1}
    \norm{v}{L_2(I)}^2 \lesssim N^{1/2} \left( 
      \abs{\jump{\tau}} + \abs{\jump{v}} \right) \norm{v}{H^{\alpha/2}(\TT)}.
  \end{align}
  {\bf Step 2:}
  Piecewise integration by parts shows
  \begin{align*}
    \vdual{D_\TT(bv)}{v} &= \vdual{v Db}{v} + \vdual{bD_\TT v}{v}\\
    &= \vdual{v Db}{v} - \vdual{v}{D_\TT(bv)} - \dual{\jump{v}}{\avg{bv}} -
    \dual{\avg{v}}{\jump{bv}},
  \end{align*}
  which gives
  \begin{align}\label{lemma:adjoint:hom:eq8}
    \vdual{D_\TT(bv)}{v} &= \vdual{v Db/2}{v} - 1/2\dual{\jump{v}}{\avg{bv}} -
    1/2\dual{\avg{v}}{\jump{bv}}.
  \end{align}
  Now we multiply~\eqref{lemma:adjoint:hom:eq:a} with $v$ and
    insert~\eqref{lemma:adjoint:hom:eq:b} as well as~\eqref{lemma:adjoint:hom:eq8}.
  Then, as $D^{(\alpha-2)\star} D_\TT v\in
  H^{\alpha/2}(\TT)$ by~\eqref{lemma:adjoint:hom:eq:b}, integration by parts
  gives
  \begin{align*}
    0 &= \vdual{D_\TT v}{D^{\alpha-2}D_\TT v} + \vdual{v(c-Db/2)}{v}\\
    &\quad+ \dual{\jump{D^{(\alpha-2)\star}D_\TT v}}{\avg{v}}
    + \dual{\avg{D^{(\alpha-2)\star}D_\TT v}}{\jump{v}}
    + 1/2 \dual{\jump{v}}{\avg{bv}} + 1/2 \dual{\avg{v}}{\jump{bv}}
  \end{align*}
  As $c-Db/2\geq0$ and $D^{\alpha-2}$ is elliptic in $H^{\alpha/2-1}(\TT)$ due to
  Lemma~\ref{lemma:rli:elliptic}, we obtain with the triangle inequality
  \begin{align*}
    \begin{split}
      \norm{D_\TT v}{H^{\alpha/2-1}(\TT)}^2 
      \lesssim
      \abs{\dual{\jump{\tau}}{\avg{v}}} + \abs{\dual{\avg{\tau}}{\jump{v}}}
      + \abs{\dual{\jump{v}}{\avg{bv}}} + \abs{\dual{\jump{bv}}{\avg{v}}}.
    \end{split}
  \end{align*}
  All terms on the right-hand side of this inequality are
  treated with Lemma~\ref{lemma:scaling}, eq.~\eqref{lemma:scaling:3}.
  For the second term, we additionally use Lemma~\ref{lemma:poincare:pw}
  and~\eqref{lemma:adjoint:hom:eq:a} and get
  \begin{align*}
    \abs{\dual{\avg{\tau}}{\jump{v}}} \lesssim N^{1/2}\norm{\tau}{H^1(\TT)}
    \cdot\abs{\jump{v}}
    &\lesssim N^{1/2}\norm{v}{L_2(I)} \abs{\jump{v}} + N
    \abs{\jump{\tau}} \cdot \abs{\jump{v}}\\
    &\lesssim N^{1/2}\norm{v}{L_2(I)} \abs{\jump{v}}
    + N^{3/2} \norm{v}{H^{\alpha/2}(\TT)} \cdot\abs{\jump{\tau}}.
  \end{align*}
  We conclude that
  \begin{align}\label{lemma:adjoint:hom:eq2}
    \snorm{v}{H^{\alpha/2}(\TT)}^2 \lesssim
    \norm{D_\TT v}{H^{\alpha/2-1}(\TT)}^2 
    \lesssim
    N^{3/2}\left( \abs{\jump{\tau}} + \abs{\jump{v}}
    \right)\norm{v}{H^{\alpha/2}(\TT)},
  \end{align}
  where we have used Lemma~\ref{lemma:poincare} for the first estimate.
  Adding~\eqref{lemma:adjoint:hom:eq1} and~\eqref{lemma:adjoint:hom:eq2} and
  dividing by $\norm{v}{H^{\alpha/2}(\TT)}$ gives
  \begin{align}\label{lemma:adjoint:hom:eq7}
    \norm{v}{H^{\alpha/2}(\TT)} \lesssim N^{3/2} \left( 
      \abs{\jump{\tau}} + \abs{\jump{v}} \right).
  \end{align}
  {\bf Step 3:} It remains to show the bound for $\tau$.
  As $\tau\in L_2(I)$, we can write $\tau=D\psi+t$
  with $\psi\in \wilde H^1(I)$ and $t\in\R$ such that
  $\norm{\psi}{H^1(I)} + \abs{t}\lesssim\norm{\tau}{L_2(I)}$.
  Integration by parts,
  identities~\eqref{lemma:adjoint:hom:eq}, Lemma~\ref{lemma:scaling} eq.~\eqref{lemma:scaling:3},
  and Cauchy-Schwarz show
  \begin{align}\label{lemma:adjoint:hom:eq5}
    \begin{split}
      \vdual{\tau}{\tau} &= \vdual{cv}{\psi} + \dual{\jump{\tau}}{\psi}
      - \vdual{D^{(\alpha-2)\star}D_\TT v+bv}{t}\\
      &\lesssim 
      \left( \norm{v}{L_2(I)} + N^{1/2}\abs{\jump{\tau}}
      + \vdual{D^{(\alpha-2)\star}D_\TT v}{1}\right)\norm{\tau}{L_2(I)}.
    \end{split}
  \end{align}
  For the last term, we use $(D^{\alpha-2}1)(x) = x^{2-\alpha}/\Gamma(2-\alpha+1)$,
  cf.~\cite[Section~2.5]{SamkoKM_Fractional},
  and integration by parts to compute
  \begin{align}\label{lemma:adjoint:hom:eq6}
    \begin{split}
      \vdual{D^{(\alpha-2)\star}D_\TT v}{1}
      &= -\vdual{v}{x^{1-\alpha}} +
      \frac{\dual{\jump{v}}{x^{2-\alpha}}}{\Gamma(2-\alpha+1)}\\
      &\lesssim \norm{v}{H^{\alpha/2}(\TT)}
      + N^{1/2}\abs{\jump{v}}.
    \end{split}
  \end{align}
  Here, the last estimate follows by direct computation.
  Combining the estimates~\eqref{lemma:adjoint:hom:eq5},~\eqref{lemma:adjoint:hom:eq6},
  and~\eqref{lemma:adjoint:hom:eq7}, we obtain
  \begin{align*}
    \norm{\tau}{L_2(I)} \lesssim
    N^{3/2} \left( \abs{\jump{\tau}} + \abs{\jump{v}} \right).
  \end{align*}
  An estimate for $D_\TT \tau$ is obtained from~\eqref{lemma:adjoint:hom:eq:a}
  and~\eqref{lemma:adjoint:hom:eq7}. This concludes the proof.
%
\end{proof}

\section{Numerical Examples}\label{section:numerics}

\subsection{Discretization and approximated optimal test functions}
Let us briefly fix some notation: We consider the discrete subspace
\begin{align*}
  U_{\hp}(\TT) := U^p(\TT)\times U^q(\TT) \times \R^{N+1} \times \R^{N-1} \subset U_\alpha,
\end{align*}
where 
\begin{align*}
  U^p(\TT) := \{ v\in L_2(I) \,:\, v|_T \text{ is polynomial of degree at most } p \;\forall T\in\TT \}
\end{align*}
is the space of $\TT$-elementwise polynomials of degree $p\in \N_0$. 
Note that $\dim(U_{\hp}(\TT))=(p+q+4)N$.
Given a basis $\left\{ \uu_j \mid j=1, \dots, \dim(U_\hp(\TT)) \right\}$
of $U_{\hp}(\TT)$,
the optimal test functions $\TtoT(\uu_j)\in V_\alpha$
($j=1,\dots,\dim(U_\hp(\TT))$ are computed by solving the problems
\begin{align}\label{eq:def:optimalTestFun}
  \dual{\TtoT(\uu_j)}{\vv}_{V_\alpha} = b(\uu_j,\vv) \quad\text{for all }
  \vv\in V_\alpha = H^1(\TT)\times H^{\alpha/2}(\TT).
\end{align}
For $\vv = (\tau,v),\ww = (\rho,w)\in V_\alpha$ the $V_\alpha$-inner product is given by
\begin{align*}
  \dual{\vv}{\ww}_{V_\alpha} = (\tau,\rho)_I + (D_\TT \tau,D_\TT \rho)_I + (v,w)_I + \sum_{T\in\TT} \int_T\int_T
  \frac{(v(x)-v(y))(w(x)-w(y))}{|x-y|^{1+\alpha}} \,dy\,dx,
\end{align*}
which induces our chosen local norm
$\norm{\vv}{V_\alpha}^2 = \norm{\tau}{H^1(\TT)}^2 + \norm{v}{H^{\alpha/2}(\TT)}^2$ on $V_\alpha$.
Since the definition of the optimal test functions~\eqref{eq:def:optimalTestFun} involves
the infinite-dimensional space
$V_\alpha$, we approximate $\TtoT_\alpha(\uu_j)\in V_\alpha$ by $\TtoT_{\alpha,h}(\uu_j) \in
V_\hp(\TT) := U^m(\TT)\times U^n(\TT)$ with $m,n\in\N_0$,
i.e., instead of~\eqref{eq:def:optimalTestFun} we solve for $j=1,\dots,\dim(U_\hp(\TT))$ the problem
\begin{align}
  \dual{\TtoT_{\alpha,h}(\uu_j)}{\vv_k}_{V_\alpha} = b(\uu_j,\vv_k) \quad
  k=1,\dots,\dim(V_\hp(\TT)).
\end{align}
The inner product $\dual{\vv}{\ww}_{V_\alpha}$ is computed analytically for
functions $\vv,\ww\in V_\hp(\TT)$.
It is seen immediately that choosing $m$ and $n$ too small in comparison with $p$ and $q$
  leads to a system which is not well posed. This question is investigated in~\cite{gq14}.
  The authors show that in the case of the Poisson equation in $\mathbb R^d$ and $p=q$,
  using polynomial degrees $n=m$ which are higher than $p+d$ is sufficient in order to obtain
  well-posedness and best approximation results.

Altogether we have to assemble the matrices
$\BLFmat := (\BLFmat_{kj})$ and $\TtoTmat:= (\TtoTmat_{k\ell})$ with
\begin{align*}
  \BLFmat_{kj} := b(\uu_j,\vv_k)\quad\text{ and }\quad
  \TtoTmat_{k\ell} := \dual{\vv_\ell}{\vv_k}_{V_\alpha},
\end{align*}
where $\uu_j$ and $\vv_k$, $j=1,\dots,\dim(U_\hp(\TT))$, $k = 1,\dots,\dim(V_\hp(\TT))$,
are the basis functions described above.
Note that $\TtoTmat$ has a sparse structure, whereas $\BLFmat$ contains a dense block
corresponding to the discretization of the fractional integral operator.
With the definition of the right-hand side vector
\begin{align*}
  \ff_j := \ell(\vv_j) \quad\text{for all } j=1,\dots,\dim(V_\hp(\TT))
\end{align*}
the computation of the DPG solution~\eqref{thm:dpg:eq:discrete} consists in solving
the linear system
\begin{align}
  \BLFmat^T \TtoTmat^{-1} \BLFmat \xx = \BLFmat^T \TtoTmat^{-1} \ff.
\end{align}

An advantage of the DPG method is that, by design, we can evaluate the error in the energy norm.
We define the local contributions of the error in the energy norm
on an element $\el\in\TT$, $\est(T)$, as
\begin{align}
  \est(T)^2 := \sum_{\substack{\{j\,:\, \vv_j|_{T'} = 0 \text{ for } T'\neq T \} }}
  (\ff-\BLFmat\xx)_j \left( \TtoTmat^{-1}(\ff-\BLFmat\xx) \right)_j
\end{align}
Then, with $r_h \in V_\hp$ denoting the element corresponding to the vector $\TtoTmat^{-1}(\ff-\BLFmat\xx)$ it holds
\begin{align}
  \est^2 := \sum_{T\in\TT} \est(T)^2 = (\ff-\BLFmat\xx)^T \left( \TtoTmat^{-1}(\ff-\BLFmat\xx) \right) = \norm{r_h}{V_\alpha}^2.
\end{align}

Let us discuss the convergence rates we can expect.
Due to standard approximation results of the $L_2$-orthogonal projection
$\pi_p : L_2(I)\rightarrow U^p(\TT)$ we have
\begin{align*}
  \inf_{\sigma_\hp'\in U^p(\TT)} N^{1-\alpha/2}\norm{\sigma-\sigma_\hp'}{H^{\alpha/2-1}(I)}
  &\leq N^{1-\alpha/2}\norm{\sigma-\pi_p \sigma}{H^{\alpha/2-1}(I)}\\
  &\lesssim \norm{\sigma-\pi_p \sigma}{L_2(I)}
  \lesssim N^{-\min\left( p+1,s \right)} \norm{\sigma}{H^s(I)}
\end{align*}
and
\begin{align*}
  \inf_{u_\hp'\in U^q(\TT)} \norm{u-u_\hp'}{L_2(I)} &\lesssim N^{-\min\left( q+1,r \right)} \norm{u}{H^r(I)}.
\end{align*}
  According to Theorem~\ref{thm:main}, this yields
\begin{align}\label{eq:rates}
  \norm{\uu-\uu_\hp}{U_\alpha} \lesssim \est \lesssim N^{-\min\left( q+1, p+1, r, s \right)}
  \left( \norm{\sigma}{H^s(I)} + \norm{u}{H^r(I)} \right).
\end{align}
Here, the fact that $\est$ can be included in this estimate in this way follows from Theorem~\ref{thm:dpg}.
For the numerical examples where the exact solution $\uu = (\sigma,u,\widehat\sigma,\widehat u)$ is known,
we can compute the exact error $\norm{\uu}{U_\alpha}$. For this we define the quantities
\begin{align*}
  \err(u_h) &:= \norm{u-u_h}{L_2(I)}, \\
  \err(\sigma_h) &:= N^{1-\alpha/2}\norm{h^{\alpha/2-1}(\sigma-\sigma_h)}{L_2(I)}, \\
  \err(\widehat u_h) &:= N^{-1/2} |\widehat u - \widehat u_h|, \\
  \err(\widehat \sigma_h) &:= N^{-1/2} |\widehat \sigma - \widehat\sigma_h|.
\end{align*}
Here, $\widehat u$ are the evaluations of the function $u$ at the interior nodes
(i.e. without the endpoints of the interval $I=(0,1)$) of the mesh $\TT$ and
$\widehat\sigma$ are the evaluations of $D^{2-\alpha}Du$ at the nodes of $\TT$.
We emphasize that the norms $\err(\sigma_h)$, $\err(\widehat u_h)$, and $\widehat(\widehat\sigma_h)$
to measure the error of approximations to $\sigma$, $\wat u$, and $\wat\sigma$ are
\textit{stronger} than those contained in the norm $\norm{\uu}{U_\alpha}$ on the
left-hand side of~\eqref{eq:rates}. 
However, the experiments show that we have optimal convergence rates also in these stronger norms.
We emphasize that we even have the rigorous error bound
\begin{align*}
  \norm{\uu-\uu_\hp}{U_\alpha}^2 \lesssim \est^2 \lesssim \err(\sigma_h)^2 + \err(u_h)^2.
\end{align*}

\subsection{Example~1}\label{sec:example1}

\begin{figure}[htb]
  \centering
  \psfrag{nE}[c][c]{\tiny number of elements}
  \psfrag{error}[c][c]{\tiny errors }
  \psfrag{N1}[c][c]{\tiny $\OO(N^{-1})$}
  \psfrag{N2}[c][c]{\tiny $\OO(N^{-2})$}
  \psfrag{N25}[c][c][1][-30]{\tiny $\OO(N^{-5/2})$}
  \psfrag{p0q0n2m2}[c][c]{\tiny $p=0, q=0, m=2, n=2$}
  \psfrag{p1q0n2m2}[c][c]{\tiny $p=1, q=0, m=2, n=2$}
  \psfrag{p0q1n2m2}[c][c]{\tiny $p=0, q=1, m=2, n=2$}
  \psfrag{p1q1n3m3}[c][c]{\tiny $p=1, q=1, m=3, n=3$}

  \psfrag{errEst}{\tiny $\est$}
  \psfrag{errU}{\tiny $\err(u_h)$}
  \psfrag{errUhat}{\tiny $\err(\widehat u_h)$}
  \psfrag{errSigmaHat}{\tiny $\err(\widehat \sigma_h)$}
  \psfrag{errSigma}{\tiny $\err(\sigma_h)$}

  \includegraphics[width=0.49\textwidth]{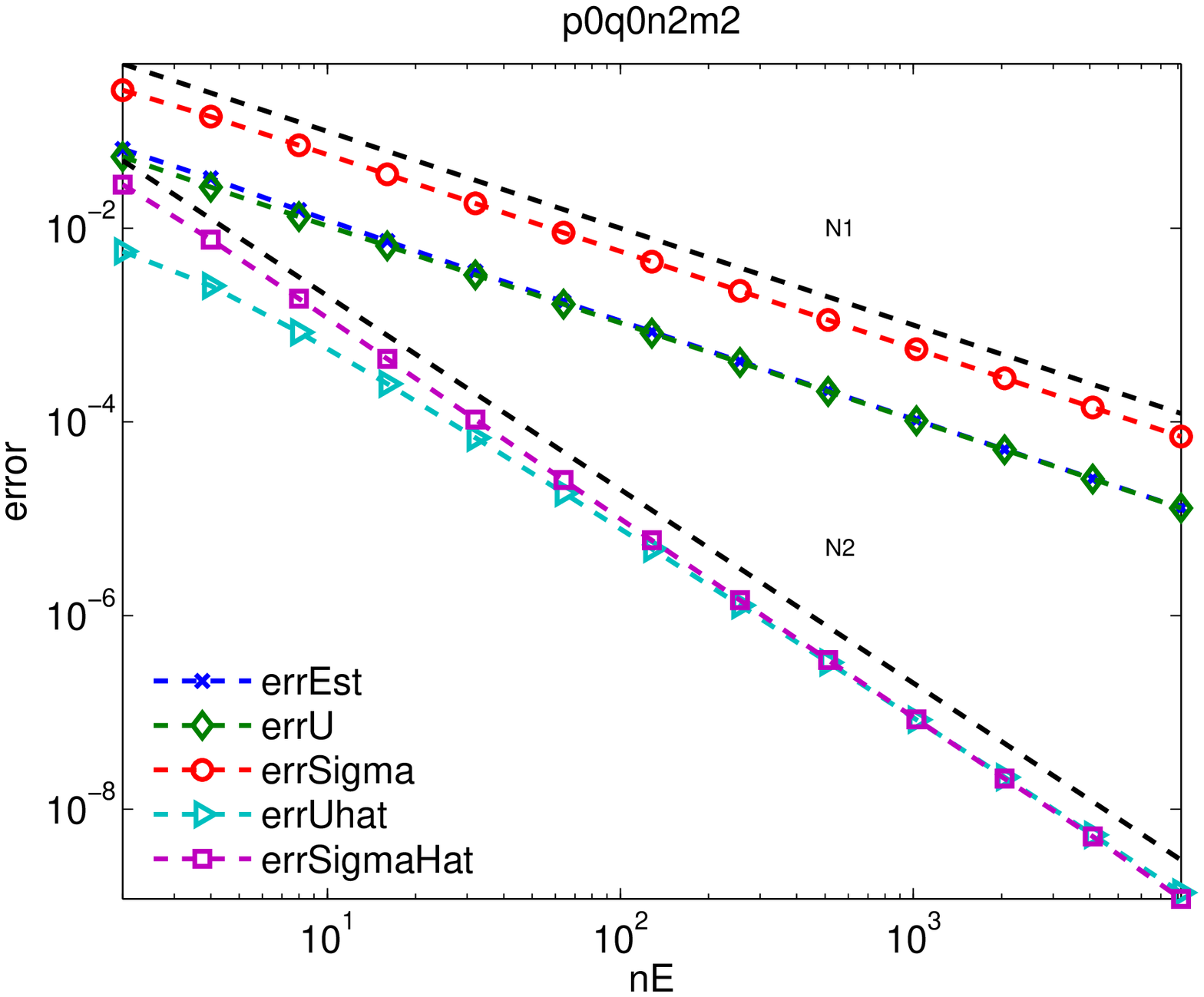}
  \includegraphics[width=0.49\textwidth]{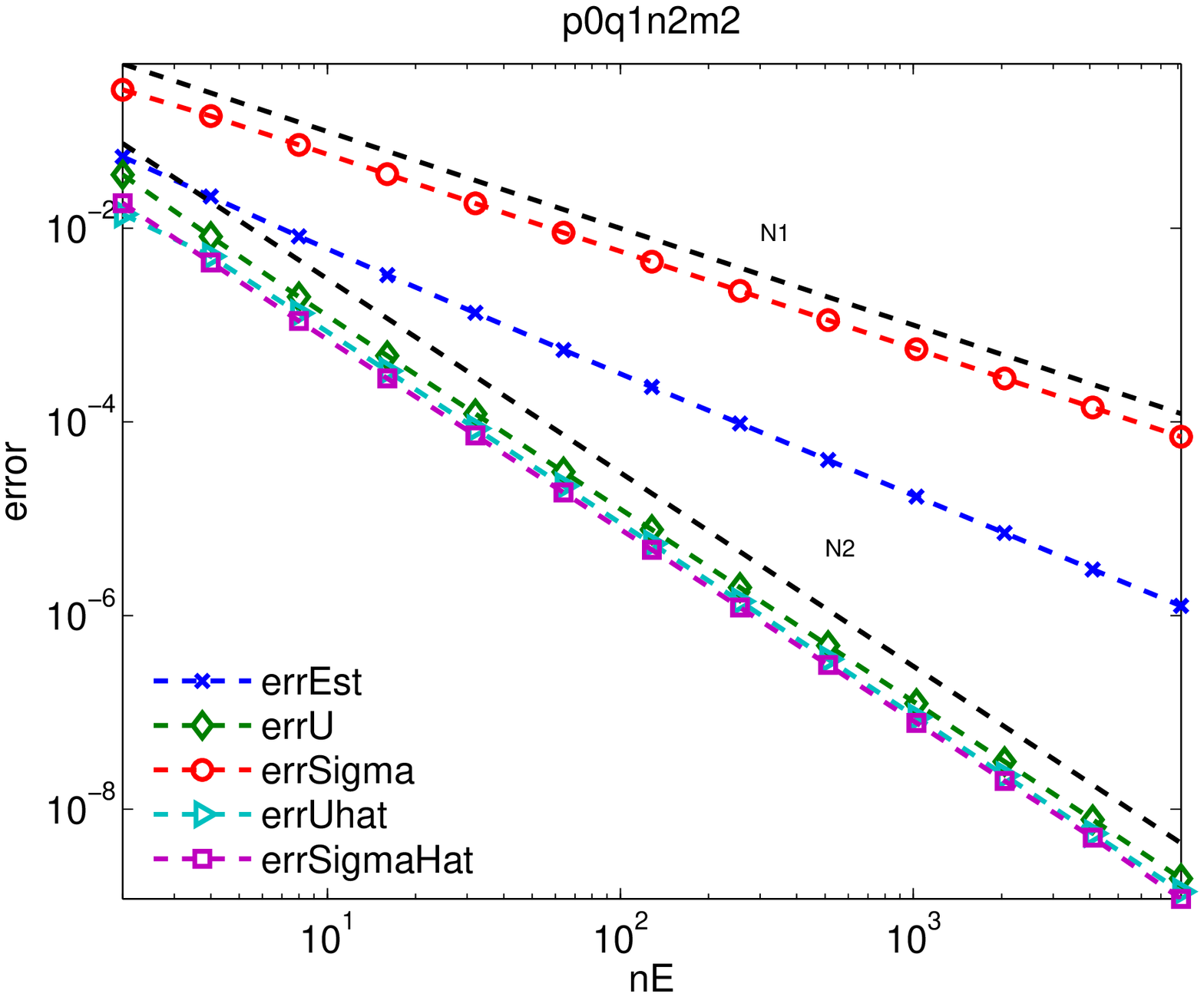}
  \includegraphics[width=0.49\textwidth]{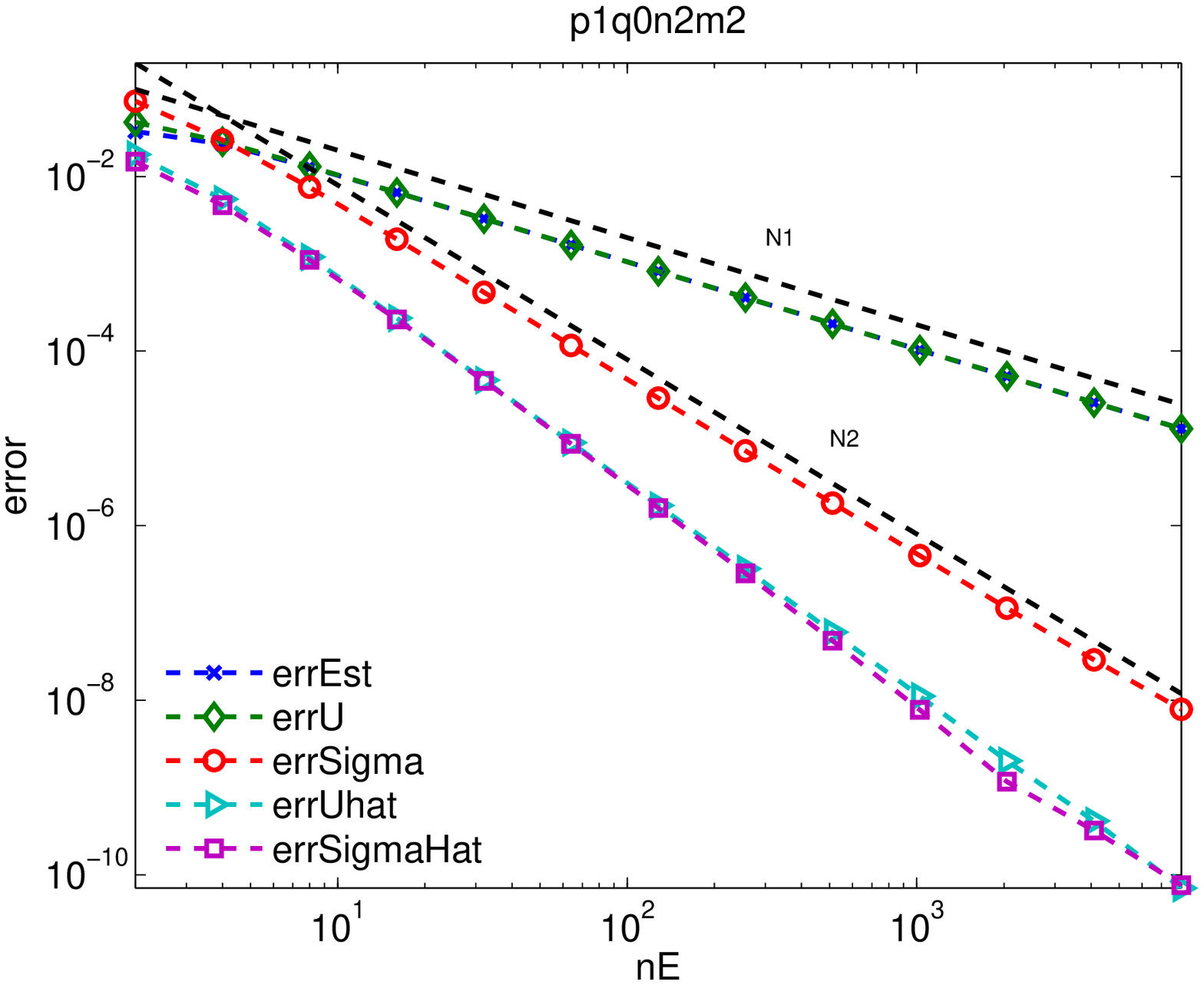}
  \includegraphics[width=0.49\textwidth]{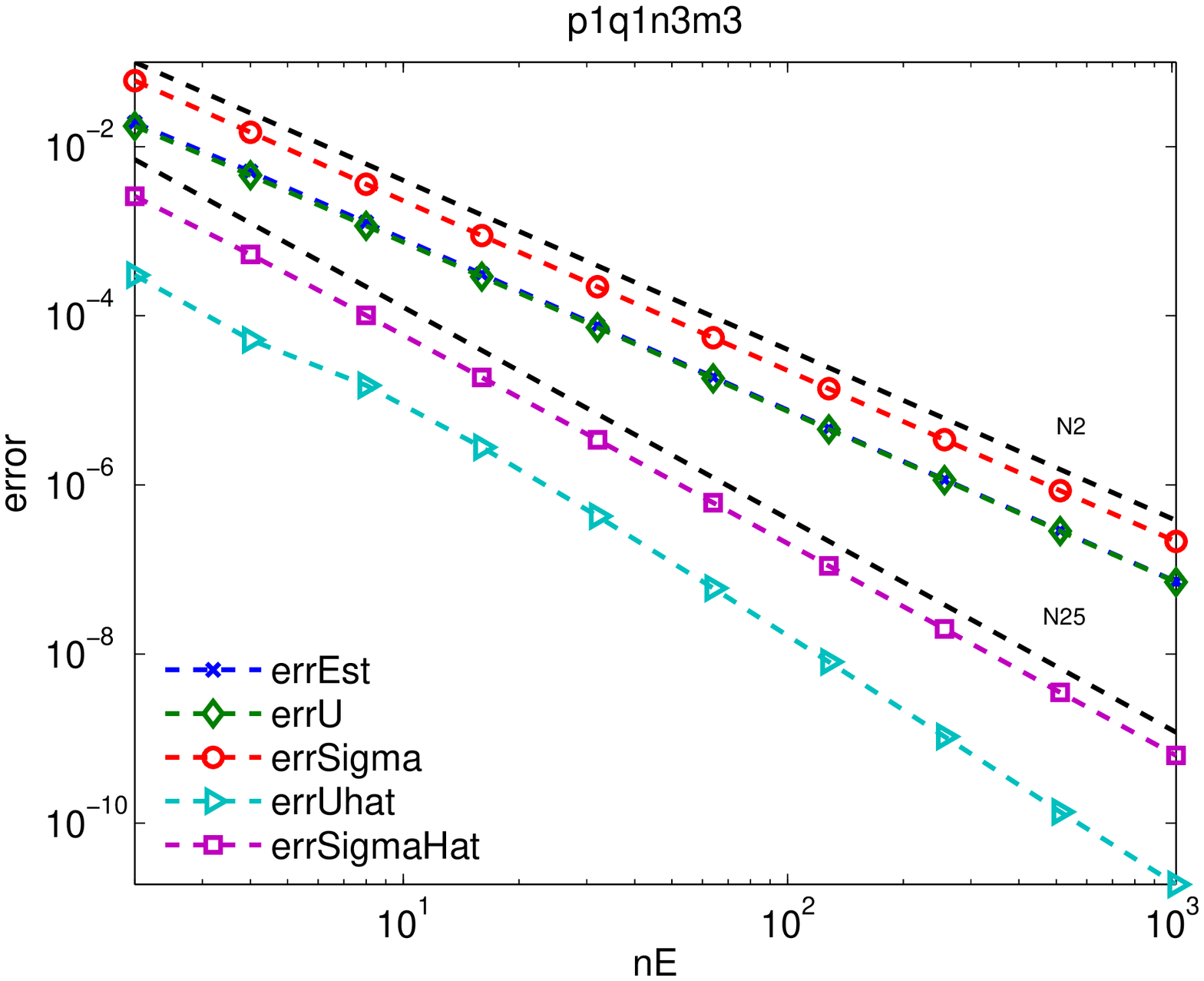}

  \caption{Experimental convergence rates for Example 1 from Section~\ref{sec:example1}.
    Uniform mesh refinement is used througout.}
  \label{fig:example1}
\end{figure}

We consider the following example, see also~\cite[Section~5,
Example~2]{ErvinR_NumPDE_06}:
Let $I = (0,1)$, $\alpha = 3/2$, $b(x):= 1/2$, $c(x):=1/2$ for $x\in I$ and prescribe the exact solution $u(x) = x^2-x^3$.
Then, the right-hand side is given by
\begin{align*}
  f(x) = -2\frac{\Gamma(2)}{\Gamma(3-\alpha)} x^{2-\alpha} +3\frac{\Gamma(3)}{\Gamma(4-\alpha)} x^{3-\alpha}
  -\frac12 x^3 - x^2 + x.
\end{align*}
Furthermore, straightforward calculations show
\begin{align*}
  \sigma(x) &= Du = 2x-3x^2, \\
  D^{2-\alpha}\sigma(x) &=  D^{2-\alpha}D u = 2\frac{\Gamma(2)}{\Gamma(4-\alpha)} x^{3-\alpha} -
  3\frac{\Gamma(3)}{\Gamma(5-\alpha)} x^{4-\alpha}.
\end{align*}
We consider uniform meshes on $I$ with mesh-size $h = 1/N$ and $N = \#\TT$.
Figure~\ref{fig:example1} shows results for different values of $p,q,m,n$.
As $u$ and $\sigma$ are both smooth, we expect from~\eqref{eq:rates} that
$\est = \OO(N^{-\min\left( p+1,q+1 \right)})$, and the numerical experiments
reflect this expectation. We even see in the experiments the simultaneous approximation orders
$\err(u_h)=\OO(N^{-(q+1)})$ and $\err(\sigma_h)=\OO(N^{-(p+1)})$.
The trace errors $\err(\widehat u_h)$ and $\err(\widehat\sigma_h)$ show higher convergence rates in all cases.
In the case $p=0, q=1, m=2,n=2$ (upper right plot),
$\est$ converges slightly faster than $\err(\sigma_h)$ but slower than $\err(u_h)$.

\subsection{Example~2}\label{sec:example2}

\begin{figure}[htb]
  \centering
  \psfrag{nE}[c][c]{\tiny number of elements}
  \psfrag{error}[c][c]{\tiny errors}
  \psfrag{N1}[c][c]{\tiny $\OO(N^{-1})$}
  \psfrag{N2}[c][c]{\tiny $\OO(N^{-2})$}
  \psfrag{N3}[c][c]{\tiny $\OO(N^{-3})$}
  \psfrag{N4}[c][c]{\tiny $\OO(N^{-4})$}
  \psfrag{Nmlp110}[c][c]{\tiny $\OO(N^{-\lambda+1/10})$}
  \psfrag{Nmlm110}[c][c]{\tiny $\OO(N^{-\lambda-1/10})$}
  \psfrag{N12l}[c][c]{\tiny $\OO(N^{1/2-\lambda})$}
  \psfrag{N810}[c][c]{\tiny $\OO(N^{-8/10})$}
  \psfrag{N25}[c][c][1][-30]{\tiny $\OO(N^{-5/2})$}

  \psfrag{p0q0n2m2}[c][c]{\tiny $p=0, q=0, m=2, n=2, \theta =1$}
  \psfrag{p0q0n2m2Adap}[c][c]{\tiny $p=0, q=0, m=2, n=2, \theta =0.4$}
  \psfrag{p1q1n3m3}[c][c]{\tiny $p=1, q=1, m=3, n=3, \theta=0.4$}
  \psfrag{p2q2n4m4}[c][c]{\tiny $p=2, q=2, m=4, n=4, \theta=0.4$}
 
  \psfrag{errEst}{\tiny $\est$}
  \psfrag{errU}{\tiny $\err(u_h)$}
  \psfrag{errUhat}{\tiny $\err(\widehat u_h)$}
  \psfrag{errSigmaHat}{\tiny $\err(\widehat \sigma_h)$}
  \psfrag{errSigma}{\tiny $\err(\sigma_h)$}

  \includegraphics[width=0.49\textwidth]{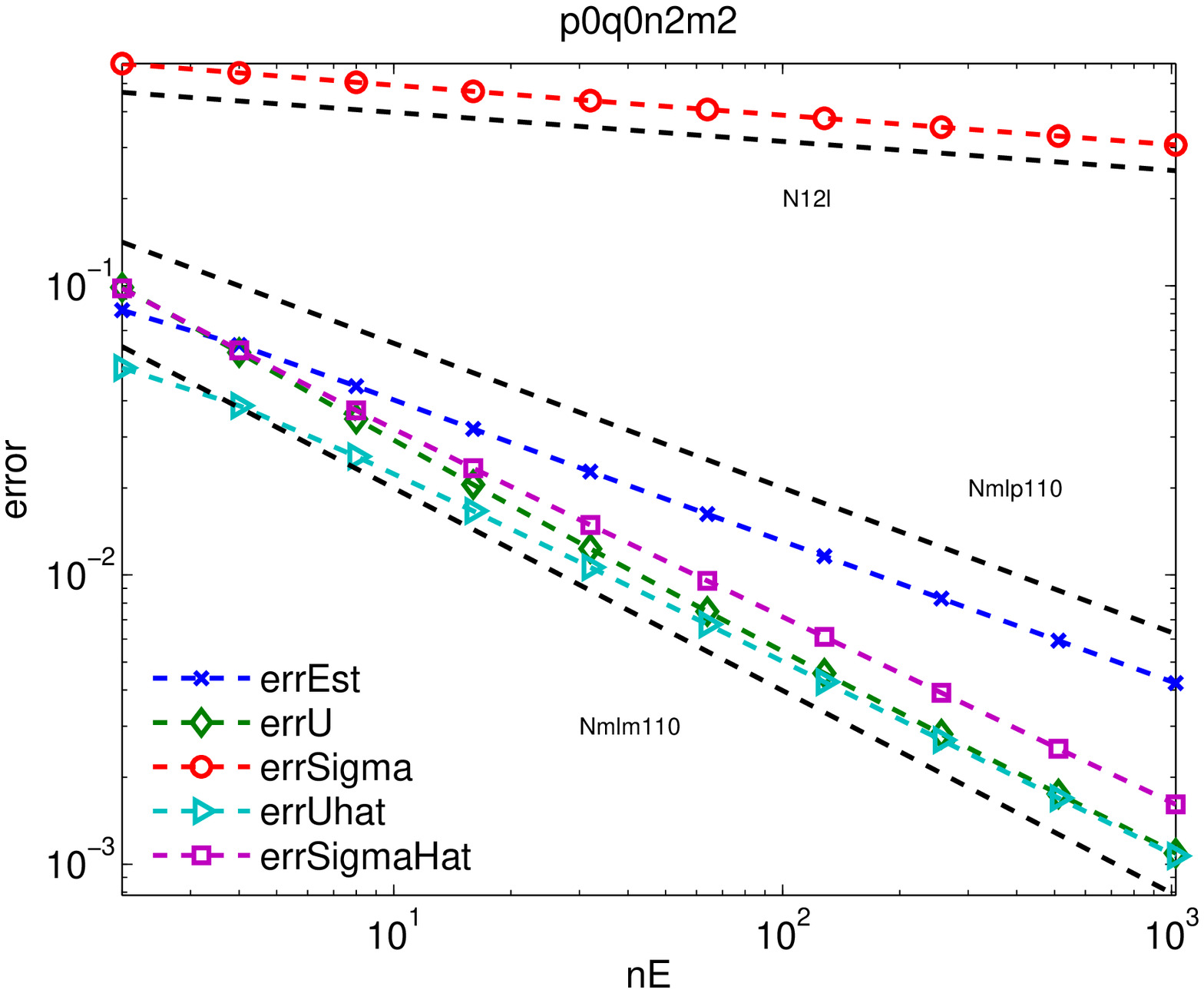}
  \includegraphics[width=0.49\textwidth]{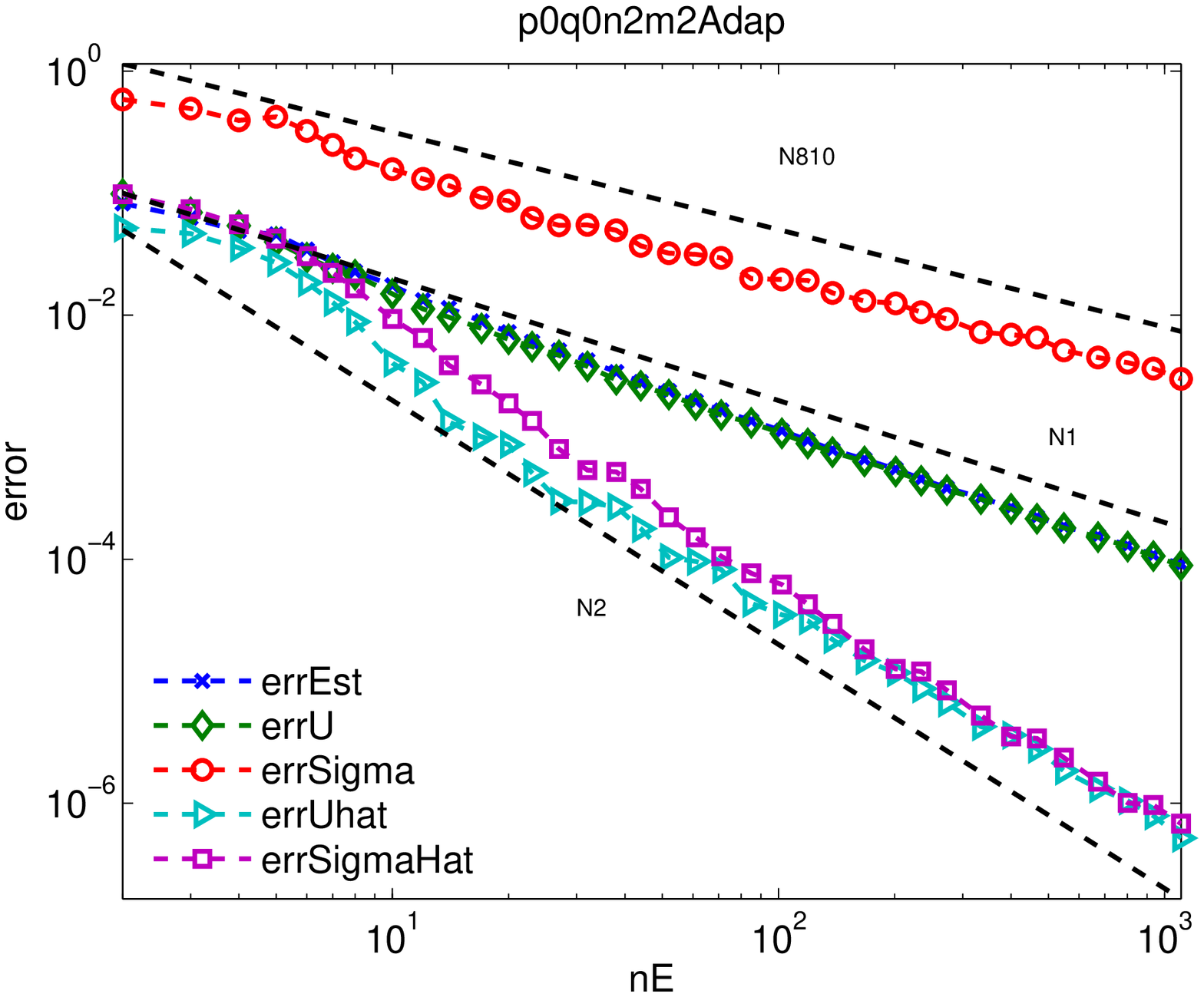}
  \includegraphics[width=0.49\textwidth]{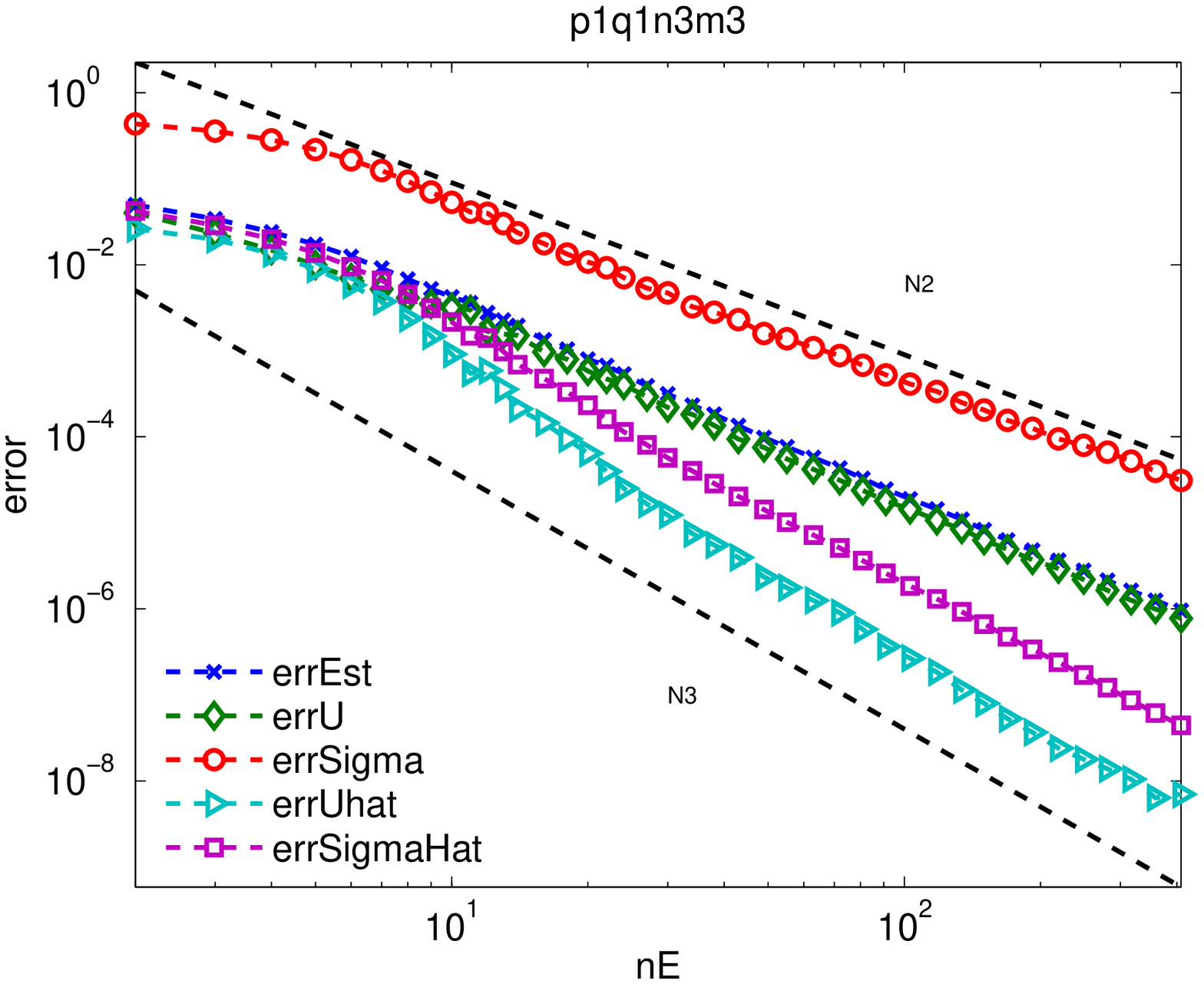}
  \includegraphics[width=0.49\textwidth]{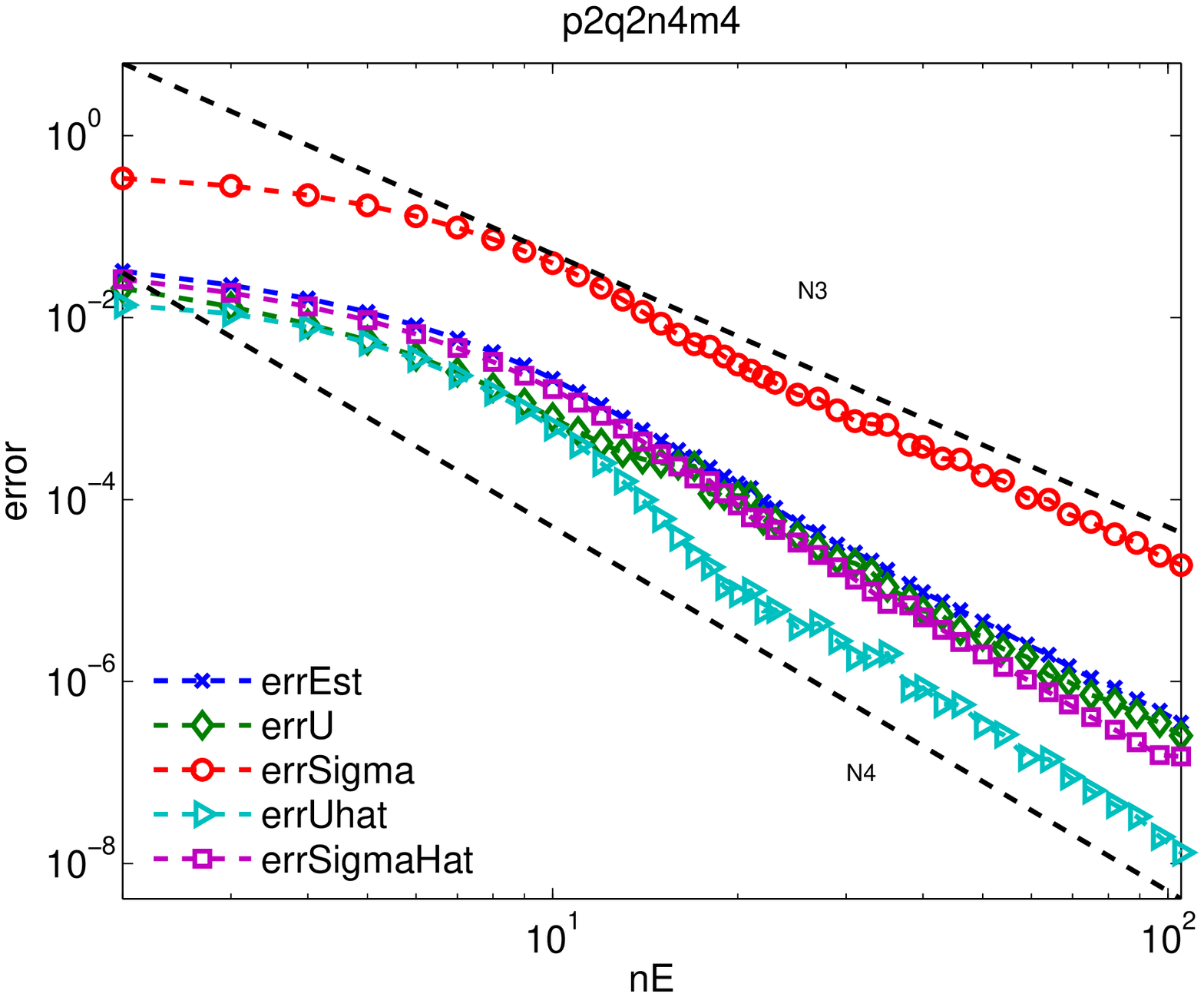}

  \caption{Experimental convergence rates for Example 2 from Section~\ref{sec:example2}.
  Uniform mesh refinement (upper left) and adaptive mesh refinement (upper right and below).}
  \label{fig:example2}
\end{figure}

For the next example we prescribe the exact solution $u(x) = x^\lambda-x$ with $1/2<\lambda<3/2$ on $I = (0,1)$, see
also~\cite[Section~5, Example~3]{ErvinR_NumPDE_06}. 
The right-hand side as well as $\sigma$ are given by
\begin{align*}
  f(x) &= -\frac{\Gamma(\lambda+1)}{\Gamma(\lambda+1-\alpha)} x^{\lambda-\alpha} + \frac{1}{\Gamma(2-\alpha)} x^{1-\alpha}, \\
  \sigma(x) &= Du = \lambda x^{\lambda-1} - 1, \\
  D^{\alpha-2} Du(x) &= \frac{\Gamma(\lambda+1)}{\Gamma(\lambda+2-\alpha)} x^{\lambda+1-\alpha} -
  \frac{1}{\Gamma(3-\alpha)} x^{2-\alpha}.
\end{align*}

We have $u\in H^{\lambda+1/2-\eps}(I)$ and $\sigma\in H^{\lambda-1/2-\eps}(I)$ for all $\eps>0$,
and hence, due to $1/2<\lambda<3/2$, with a view to~\eqref{eq:rates}, we expect
a convergence rate of $\est = \OO(N^{1/2-\lambda})$.
However, with a view to the norm $\norm{\cdot}{U_\alpha}$, the expected rate, dictated by $\sigma$ in this case,
would be $\OO(N^{1/2-\lambda + \alpha/2 - 1})$. This is what we will see for uniform refinement.
In order to regain the optimal convergence orders $\OO(N^{-\min\left( p+1,q+1 \right)})$,
we utilize an adaptive strategy where we use $\est(T)$ as local
refinement indicators and mark elements
$\mathcal{M}\subseteq\TT$ according to D\"orfler's marking criterion
\begin{align}
  \theta \est^2 \leq \sum_{T\in\mathcal{M}} \est(T)^2,
\end{align}
where we use $\theta = 0.4$ and $\mathcal{M}$ is a set of minimal cardinality.
Note that $\theta = 1$ means uniform refinement, i.e., $\mathcal{M} = \TT$.
Each marked element $T\in\mathcal{M}$ is bisected such that local quasi-uniformity
\begin{align*}
  \max_{\substack{T,T'\in\TT \\ \overline{T}\cap\overline{T'}\neq \emptyset} } \frac{\diam(T)}{\diam(T')} \leq 2
\end{align*}
is preserved.
Figure~\ref{fig:example2} shows $\est$ and the error quantities for the parameters
\begin{align*}
  \lambda = 0.6, \quad \alpha = 1.2.
\end{align*}
In the upper left plot the results for uniform refinement and $p = q =0$, $n=m=2$ are given.
We observe the convergence rate $\est = \OO(N^{1/2-\lambda+\alpha/2-1}) = \OO(N^{-\lambda + 1/10})$.
As expected, also for the separated error contributions, we observe reduced convergence rates.
Adaptive refinement recovers the optimal rate $\OO(N^{-\min\left( p+1,q+1 \right)})$,
as is seen in the three remaining plots.
As in Example~\ref{sec:example1}, we see that the traces even have better convergence rates.

\subsection{Example~3}\label{sec:example3}

\begin{figure}[htb]
  \centering
  \psfrag{nE}[c][c]{\tiny number of elements}
  \psfrag{error}[c][c]{\tiny error in energy norm}
  \psfrag{N1}[c][c]{\tiny $\OO(N^{-1})$}
  \psfrag{N2}[c][c]{\tiny $\OO(N^{-2})$}
  \psfrag{N3}[c][c]{\tiny $\OO(N^{-3})$}
  \psfrag{N12malpha2}[c][c]{\tiny $\OO(N^{1/2-\alpha/2})$}

  \psfrag{alpha16}[c][c]{\tiny $\alpha = 1.6$}
  \psfrag{alpha18}[c][c]{\tiny $\alpha = 1.8$}

  \psfrag{pq0Unif}{\tiny $p=q=0, \theta = 1$}
  \psfrag{pq0adap}{\tiny $p=q=0,\theta = 0.4$}
  \psfrag{pq1}{\tiny $p=q=1, \theta = 0.4$}
  \psfrag{pq2}{\tiny $p=q=2, \theta = 0.4$}

  \includegraphics[width=0.49\textwidth]{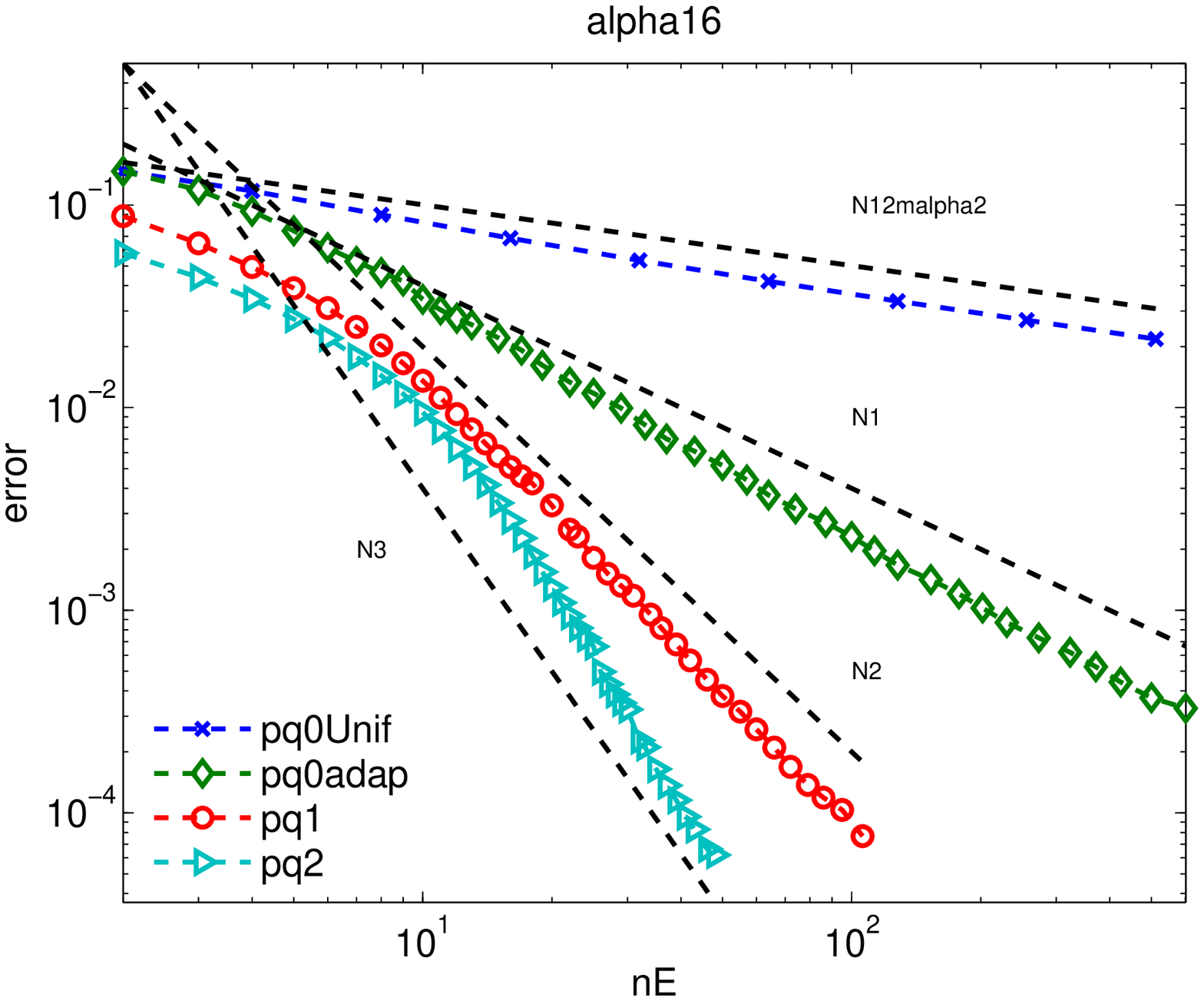}
  \includegraphics[width=0.49\textwidth]{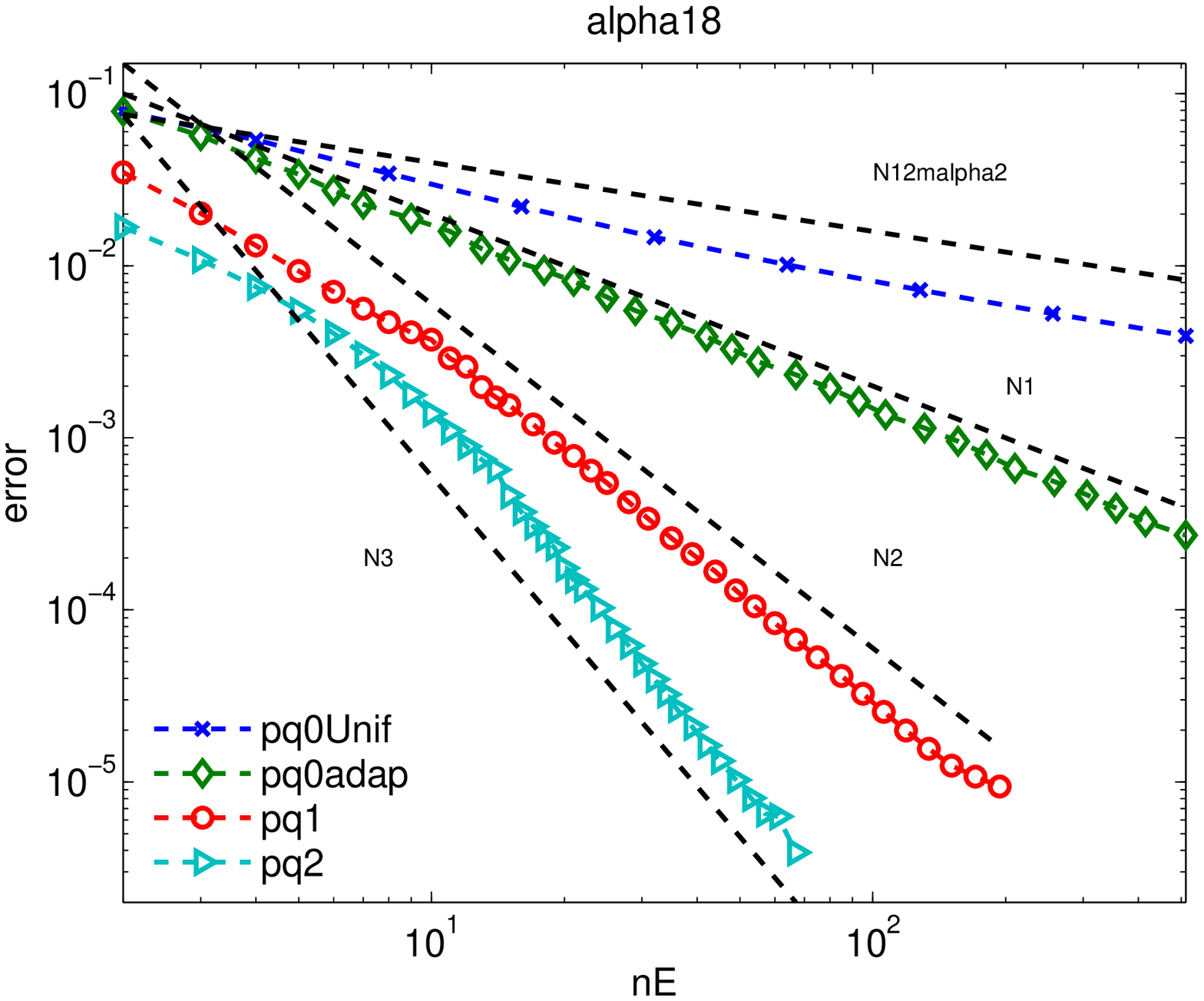}

  \caption{Experimental convergence rates for Example 3 from Section~\ref{sec:example3}.
  The choice $\theta=1$ refers to uniform mesh refinement, while $\theta=0.4$ refers
to adaptive mesh refinement.}
  \label{fig:example3}
\end{figure}

In the last experiment we set $f(x) := \log(x)$ for $x\in I =(0,1)$ and note that
$f\in L_2(I)$.
For this right-hand side we do not know the explicit form of the solution $u$.
Therefore, we only plot the error in the energy norm $\est$ for different values of $p,q$, $m,n$ and $\alpha$, respectively.
Throughout, we set $p=q$ as well as $m = n = p+2$.
Figure~\ref{fig:example3} shows the error in the energy norm $\est$ for $\alpha = 1.6$ (left) and $\alpha =1.8$ (right).
We compare uniform refinement ($\theta=1$) and adaptive refinement with $\theta=0.4$ for $p=q=0$.
Moreover, we plot the results in the adaptive case with $p=q=1$ resp. $p=q=2$.
We observe that for adaptive refinement we obtain convergence rates $p+1$, i.e., $\est = \OO(N^{-(p+1)})$, whereas for
uniform refinement we get only the suboptimal rate $\alpha/2-1/2$.

\bibliographystyle{abbrv}
\bibliography{literature}

\def\cprime{$'$}
\begin{thebibliography}{10}

\bibitem{b70}
I.~Babu{\v{s}}ka.
\newblock Error-bounds for finite element method.
\newblock {\em Numer. Math.}, 16:322--333, 1970/1971.

\bibitem{BarrettM_84}
J.~W. Barrett and K.~W. Morton.
\newblock Approximate symmetrization and {P}etrov-{G}alerkin methods for
  diffusion-convection problems.
\newblock {\em Comput. Methods Appl. Mech. Engrg.}, 45(1-3):97--122, 1984.

\bibitem{ben001}
D.~A. Benson, S.~Wheatcraft, and M.~Meerschaert.
\newblock The fractional-order governing equation of l\'{e}vy motion.
\newblock {\em Water Resour. Res.}, 36(6):1413--1424, 2000.

\bibitem{BottassoMS_02_DPG}
C.~L. Bottasso, S.~Micheletti, and R.~Sacco.
\newblock The discontinuous {P}etrov-{G}alerkin method for elliptic problems.
\newblock {\em Comput. Methods Appl. Mech. Engrg.}, 191(31):3391--3409, 2002.

\bibitem{BramwellDGQ_12_elasticity}
J.~Bramwell, L.~Demkowicz, J.~Gopalakrishnan, and W.~Qiu.
\newblock A locking-free {$hp$} {DPG} method for linear elasticity with
  symmetric stresses.
\newblock {\em Numer. Math.}, 122(4):671--707, 2012.

\bibitem{b74}
F.~Brezzi.
\newblock On the existence, uniqueness and approximation of saddle-point
  problems arising from {L}agrangian multipliers.
\newblock {\em Rev. Francaise Automat. Informat. Recherche Op\'erationnelle
  S\'er. Rouge}, 8(R-2):129--151, 1974.

\bibitem{BroersenS_14_IMA}
D.~Broersen and R.~P. Stevenson.
\newblock A {P}etrov-{G}alerkin discretization with optimal test space of a
  mild-weak formulation of convection-diffusion equations in mixed form.
\newblock {\em IMA J. Numer. Anal.}, 35(1):39--73, 2015.

\bibitem{CessenatD_98_UWF}
O.~Cessenat and B.~Despres.
\newblock Application of an ultra weak variational formulation of elliptic
  {PDE}s to the two-dimensional {H}elmholtz problem.
\newblock {\em SIAM J. Numer. Anal.}, 35(1):255--299, 1998.

\bibitem{ChanHBD_14}
J.~Chan, N.~Heuer, T.~Bui-Thanh, and L.~Demkowicz.
\newblock A robust {DPG} method for convection-dominated diffusion problems
  {II}: adjoint boundary conditions and mesh-dependent test norms.
\newblock {\em Comput. Math. Appl.}, 67(4):771--795, 2014.

\bibitem{cui091}
M.~Cui.
\newblock Compact finite difference method for the fractional diffusion
  equation.
\newblock {\em J. Comput. Phys.}, 228(20):7792--7804, 2009.

\bibitem{dg11}
L.~Demkowicz and J.~Gopalakrishnan.
\newblock Analysis of the {DPG} method for the {P}oisson equation.
\newblock {\em SIAM J. Numer. Anal.}, 49(5):1788--1809, 2011.

\bibitem{DemkowiczG_11_CDP}
L.~Demkowicz and J.~Gopalakrishnan.
\newblock A class of discontinuous {Petrov-Galerkin} methods. {Part II}:
  {O}ptimal test functions.
\newblock {\em Numer. Methods Partial Differential Eq.}, 27:70--105, 2011.

\bibitem{DemkowiczH_13}
L.~Demkowicz and N.~Heuer.
\newblock Robust {DPG} method for convection-dominated diffusion problems.
\newblock {\em SIAM J. Numer. Anal.}, 51(5):2514--2537, 2013.

\bibitem{Despres_94_SUF}
B.~Despr{\'e}s.
\newblock Sur une formulation variationnelle de type ultra-faible.
\newblock {\em C. R. Acad. Sci. Paris S\'er. I Math.}, 318(10):939--944, 1994.

\bibitem{ErvinR_NumPDE_06}
V.~J. Ervin and J.~P. Roop.
\newblock Variational formulation for the stationary fractional advection
  dispersion equation.
\newblock {\em Numer. Methods Partial Differential Equations}, 22(3):558--576,
  2006.

\bibitem{FeischlFHKP_ABEM_ARCME_14}
M.~Feischl, T.~F{\"u}hrer, N.~Heuer, M.~Karkulik, and D.~Praetorius.
\newblock Adaptive boundary element methods.
\newblock {\em Archives of Computational Methods in Engineering}, pages 1--81,
  2014.

\bibitem{gq14}
J.~Gopalakrishnan and W.~Qiu.
\newblock An analysis of the practical {DPG} method.
\newblock {\em Math. Comp.}, 83(286):537--552, 2014.

\bibitem{Grisvard_EllipticProblems_85}
P.~Grisvard.
\newblock {\em Elliptic problems in nonsmooth domains}, volume~24 of {\em
  Monographs and Studies in Mathematics}.
\newblock Pitman (Advanced Publishing Program), Boston, MA, 1985.

\bibitem{Heuer_ASMSLO_NumerMath_01}
N.~Heuer.
\newblock Additive {S}chwarz method for the {$p$}-version of the boundary
  element method for the single layer potential operator on a plane screen.
\newblock {\em Numer. Math.}, 88(3):485--511, 2001.

\bibitem{Heuer_Equivalence_JMAA_14}
N.~Heuer.
\newblock On the equivalence of fractional-order {S}obolev semi-norms.
\newblock {\em J. Math. Anal. Appl.}, 417(2):505--518, 2014.

\bibitem{HeuerK_14_DPGBEM}
N.~Heuer and M.~Karkulik.
\newblock Discontinuous {P}etrov-{G}alerkin boundary elements.
\newblock Technical Report http://arxiv.org/abs/1408.5374, Pontificia
  Universidad Cat\'olica de Chile, 2014.

\bibitem{HeuerP_DPG_SINUM_14}
N.~Heuer and F.~Pinochet.
\newblock Ultra-weak formulation of a hypersingular integral equation on
  polygons and {DPG} method with optimal test functions.
\newblock {\em SIAM J. Numer. Anal.}, 52(6):2703--2721, 2014.

\bibitem{HughesB_79}
T.~J.~R. Hughes and A.~Brooks.
\newblock A multidimensional upwind scheme with no crosswind diffusion.
\newblock In {\em Finite element methods for convection dominated flows
  ({P}apers, {W}inter {A}nn. {M}eeting {A}mer. {S}oc. {M}ech. {E}ngrs., {N}ew
  {Y}ork, 1979)}, volume~34 of {\em AMD}, pages 19--35. Amer. Soc. Mech. Engrs.
  (ASME), New York, 1979.

\bibitem{JinLPR_Mcom_15}
B.~Jin, R.~Lazarov, J.~Pasciak, and W.~Rundell.
\newblock Variational formulation of problems involving fractional order
  differential operators.
\newblock {\em Math. Comp.}
\newblock to appear.

\bibitem{li121}
C.~Li, F.~Zeng, and F.~Liu.
\newblock Spectral approximations to the fractional integral and derivative.
\newblock {\em Fract. Calc. Appl. Anal.}, 15(3):383--406, 2012.

\bibitem{liu041}
F.~Liu, V.~Anh, and I.~Turner.
\newblock Numerical solution of the space fractional {F}okker-{P}lanck
  equation.
\newblock In {\em Proceedings of the {I}nternational {C}onference on {B}oundary
  and {I}nterior {L}ayers---{C}omputational and {A}symptotic {M}ethods ({BAIL}
  2002)}, volume 166, pages 209--219, 2004.

\bibitem{liu111}
Q.~Liu, F.~Liu, I.~Turner, and V.~Anh.
\newblock Finite element approximation for a modified anomalous subdiffusion
  equation.
\newblock {\em Appl. Math. Model.}, 35(8):4103--4116, 2011.

\bibitem{mai971}
F.~Mainardi.
\newblock Fractional calculus: some basic problems in continuum and statistical
  mechanics.
\newblock In {\em Fractals and fractional calculus in continuum mechanics
  ({U}dine, 1996)}, volume 378 of {\em CISM Courses and Lectures}, pages
  291--348. Springer, Vienna, 1997.

\bibitem{mee041}
M.~M. Meerschaert and C.~Tadjeran.
\newblock Finite difference approximations for fractional advection-dispersion
  flow equations.
\newblock {\em J. Comput. Appl. Math.}, 172(1):65--77, 2004.

\bibitem{Rudin_FANA}
W.~Rudin.
\newblock {\em Functional analysis}.
\newblock International Series in Pure and Applied Mathematics. McGraw-Hill,
  Inc., New York, second edition, 1991.

\bibitem{SamkoKM_Fractional}
S.~G. Samko, A.~A. Kilbas, and O.~I. Marichev.
\newblock {\em Fractional integrals and derivatives}.
\newblock Gordon and Breach Science Publishers, Yverdon, 1993.
\newblock Theory and applications, Edited and with a foreword by S. M.
  Nikol{\cprime}ski{\u\i}, Translated from the 1987 Russian original, Revised
  by the authors.

\bibitem{shl871}
M.~F. Shlesinger, B.~J. West, and J.~Klafter.
\newblock L\'evy dynamics of enhanced diffusion: application to turbulence.
\newblock {\em Phys. Rev. Lett.}, 58(11):1100--1103, 1987.

\bibitem{tad071}
C.~Tadjeran and M.~M. Meerschaert.
\newblock A second-order accurate numerical method for the two-dimensional
  fractional diffusion equation.
\newblock {\em J. Comput. Phys.}, 220(2):813--823, 2007.

\bibitem{Triebel_InterpolationSpaces}
H.~Triebel.
\newblock {\em Interpolation theory, function spaces, differential operators}.
\newblock Johann Ambrosius Barth, Heidelberg, second edition, 1995.

\bibitem{wan121}
H.~Wang and T.~S. Basu.
\newblock A fast finite difference method for two-dimensional space-fractional
  diffusion equations.
\newblock {\em SIAM J. Sci. Comput.}, 34(5):A2444--A2458, 2012.

\bibitem{wan131}
H.~Wang and D.~Yang.
\newblock Wellposedness of variable-coefficient conservative fractional
  elliptic differential equations.
\newblock {\em SIAM J. Numer. Anal.}, 51(2):1088--1107, 2013.

\bibitem{WangYZ_PGFractional_CMAME_15}
H.~Wang, D.~Yang, and S.~Zhu.
\newblock A {P}etrov--{G}alerkin finite element method for variable-coefficient
  fractional diffusion equations.
\newblock {\em Comput. Methods Appl. Mech. Engrg.}, 290:45--56, 2015.

\bibitem{wan151}
H.~Wang and X.~Zhang.
\newblock A high-accuracy preserving spectral {G}alerkin method for the
  {D}irichlet boundary-value problem of variable-coefficient conservative
  fractional diffusion equations.
\newblock {\em J. Comput. Phys.}, 281:67--81, 2015.

\bibitem{xz03}
J.~Xu and L.~Zikatanov.
\newblock Some observations on {B}abu\v ska and {B}rezzi theories.
\newblock {\em Numer. Math.}, 94(1):195--202, 2003.

\bibitem{Xu141}
Q.~Xu and J.~S. Hesthaven.
\newblock Discontinuous {G}alerkin method for fractional convection-diffusion
  equations.
\newblock {\em SIAM J. Numer. Anal.}, 52(1):405--423, 2014.

\bibitem{zas931}
G.~M. Zaslavsky, D.~Stevens, and H.~Weitzner.
\newblock Self-similar transport in incomplete chaos.
\newblock {\em Phys. Rev. E (3)}, 48(3):1683--1694, 1993.

\bibitem{zay151}
M.~Zayernouri, M.~Ainsworth, and G.~E. Karniadakis.
\newblock A unified {P}etrov-{G}alerkin spectral method for fractional {PDE}s.
\newblock {\em Comput. Methods Appl. Mech. Engrg.}, 283:1545--1569, 2015.

\bibitem{zmdgpc11}
J.~Zitelli, I.~Muga, L.~Demkowicz, J.~Gopalakrishnan, D.~Pardo, and V.~M. Calo.
\newblock A class of discontinuous {P}etrov-{G}alerkin methods. {P}art {IV}:
  the optimal test norm and time-harmonic wave propagation in 1{D}.
\newblock {\em J. Comput. Phys.}, 230(7):2406--2432, 2011.

\end{thebibliography}
\end{document}